\newcommand{\sX}{{\mathcal{X}}}
\newcommand{\D}{{\mathcal{D}}}
\newcommand{\sB}{{\mathcal{B}}}
\newcommand{\sA}{{\mathcal{A}}}
\newtheorem{thm}{Theorem}[section]
\newtheorem{lem}[thm]{Lemma}
\newtheorem{cor}[thm]{Corollary}
\newtheorem{prop}[thm]{Proposition}
\newtheorem{conj}[thm]{Conjecture}
\theoremstyle{definition}
\newtheorem{defn}[thm]{Definition}
\newtheorem{notation}[thm]{Notation}   
\newtheorem{defn-thm}[thm]{Definition--Theorem}  
\newtheorem{defn-lem}[thm]{Definition--Lemma}  
\theoremstyle{remark}
\newtheorem{claim}[thm]{Claim}
\begin{document}
\title{Boundedness of log Calabi-Yau pairs of Fano type}
\author{Christopher D. Hacon}
\date{\today}
\address{Department of Mathematics \\
University of Utah\\
155 South 1400 East\\
JWB 233\\
Salt Lake City, UT 84112, USA}
\email{hacon@math.utah.edu}
\author{Chenyang Xu}
\address{Beijing International Center of Mathematics Research, 5 Yiheyuan Road, Haidian District, Beijing 100871, China}
\email{cyxu@math.pku.edu.cn}

\begin{abstract}
We prove a boundedness result for klt pairs $(X,B)$ such that $K_X+B\equiv 0$ and $B$ is big. As a consequence we obtain a positive answer to the Effective Iitaka Fibration Conjecture for  klt pairs with big boundary. 
\end{abstract}
\thanks{We would like to thank Lev Borisov and James M$^{\rm c}$Kernan for helpful communications. The first author was partially supported by NSF research grants no: DMS-1300750, DMS-1265285 and a grant from the Simons foundation, the second author was partially supported by the grant `The Recruitment Program of Global Experts'. A large part of this work was done when the second author visited IAS, which was partially sponsored by Ky Fan and Yu-Fen Fan Membership Funds, S.S. Chern Fundation and NSF: DMS-1128155, 1252158. 
} 
\maketitle
\tableofcontents
\section{Introduction}
Let $X$ be a smooth complex projective variety. The canonical ring $R(K_X)=\oplus _{m\geq 0}H^0(\omega _X^m)$ is one of the fundamental birational invariants used to study the geometry of $X$. 
Since $R(K_X)$, is finitely generated (\cite{BCHM10, CL12}), if $R(K_X)\ne \mathbb C$, then there is a natural rational map $X\dasharrow Z={\rm Proj} R(K_X)$. This map is known as the Iitaka fibration and it is equivalent to the map $\phi _m:X\dasharrow |mK_X|$ for any $m>0$ sufficiently divisible.
It is an important problem to understand for what values of $m$, the map $\phi _m$ is birational to the Iitaka fibration.
It is classically known that for curves $m\geq 3$ suffices and for surfaces $m\geq 24$ suffices (in fact by work of 
Bombieri, if $X$ is of general type, then $m\geq 5$ is sufficient). 
This leads to the following natural conjecture (cf. \cite{HM06}).
\begin{conj}\label{c-weak}  Fix a positive integer $n$.  Then there exists an integer $k$, which only depends on $n$, such that if $X$ is an $n$-dimensional smooth complex projective variety, then  $|kK_X|$ defines a map birational to the Iitaka fibration. 
 \end{conj}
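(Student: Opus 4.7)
The plan is to attack the conjecture via the Iitaka fibration together with the Fujino--Mori canonical bundle formula, reducing effective birationality of $|kK_X|$ to data that can be bounded dimension by dimension.

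After a log resolution and a birational modification, I may assume that the Iitaka fibration $f\colon X\to Z$ is a morphism with connected fibers onto a smooth projective base $Z$ of dimension $\kappa(X)$. The Fujino--Mori canonical bundle formula then gives
\[
K_X \sim_{\mathbb{Q}} f^{*}(K_Z+B_Z+M_Z),
\]
where $(Z,B_Z)$ is klt, $M_Z$ is the moduli $\mathbb{Q}$-divisor, and $K_Z+B_Z+M_Z$ is big on $Z$. The effective birationality of $|kK_X|$ is then controlled by three quantities depending only on $n$: the Cartier index $r$ of a general fiber $F$ of $f$ (the least $r>0$ with $rK_F\sim 0$), a denominator $b$ such that $bM_Z$ is an integral nef divisor with good positivity after pullback, and an integer $m_0$ such that $|m_0(K_Z+B_Z+M_Z)|$ defines a birational map on $Z$. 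Once $r$, $b$, and $m_0$ are bounded in terms of $n$, a standard argument combining bigness of $K_Z+B_Z+M_Z$, vanishing of $R^{i}f_{*}$, and surjectivity of restriction to a general fiber produces an explicit $k$ that works universally.

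Bounding $m_0$ reduces, after clearing denominators, to effective birationality for klt log pairs of log general type with bounded denominators and bounded volume on $Z$, which is accessible via the Hacon--M$^{\rm c}$Kernan birational boundedness machinery applied to the base. Bounding $b$ is Koll\'ar's conjecture on effective $b$-semiampleness of the moduli part; I would appeal to it in the ranges where it is known and treat it as an input otherwise. The genuine obstacle is bounding the index $r$ of the general fiber $F$, which is an $(n-\dim Z)$-dimensional klt variety with $K_F\equiv 0$. The present paper supplies a uniform bound on $r$ precisely when $F$ is of Fano type: boundedness of klt log Calabi--Yau pairs $(F,\Delta_F)$ with $\Delta_F$ big forces boundedness of the Cartier index of $K_F$. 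For fibers that are not of Fano type one needs the index conjecture for klt Calabi--Yau varieties of dimension strictly less than $n$, which remains open outside of low dimensions and is the principal obstruction to an unconditional proof of the full statement. My proposal therefore takes the form of a dimensional induction: apply the main theorem of the paper to dispose of the Fano-type range, and handle the non-Fano-type Calabi--Yau case via the known low-dimensional index bounds or, conditionally, via the general index conjecture. The hard step is, and will remain, the index bound for general klt Calabi--Yau fibers.
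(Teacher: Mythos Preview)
The statement you are attempting to prove is Conjecture~1.1, which the paper does \emph{not} prove: it is stated as an open problem and serves only as motivation for the results that follow. There is therefore no proof in the paper to compare your proposal against.

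Your outline is the standard Fujino--Mori/Birkar--Zhang strategy and you correctly identify the index bound for klt Calabi--Yau fibers as the principal obstruction. However, your claim that ``the main theorem of the paper [disposes] of the Fano-type range'' is misleading in this setting. In Conjecture~1.1 there is no boundary ($B=0$), so the general fiber $F$ of the Iitaka fibration satisfies $K_F\sim_{\mathbb{Q}}0$ with \emph{trivial} boundary; such an $F$ is never of Fano type unless $\dim F=0$, i.e.\ unless $X$ is already of general type (a case handled by \cite{Tsuji06,HM06,Takayama06}). The paper's Theorem~\ref{t-k=0} requires the boundary on the fiber to be big, which is vacuous here, and Theorem~\ref{t-bd} requires $B$ to be big over the base of the Iitaka fibration, again vacuous when $B=0$. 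So the paper makes no direct progress on Conjecture~\ref{c-weak}; its contribution is to Conjecture~\ref{c-strong} in the big-boundary regime, which is disjoint from the $B=0$ case. Your proposed ``dimensional induction'' therefore never invokes the paper's results in a nontrivial way, and the entire burden falls on the open index conjecture---meaning your proposal is a conditional strategy, not a proof.

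A secondary point: bounding the Cartier index of $M_Z$ does not require Koll\'ar's effective $b$-semiampleness conjecture. By \cite{FM00} it suffices to bound the middle Betti number of a resolution of the index-$b$ cover of $F$, and this follows once the fibers are bounded; this is precisely the input that \cite{BZ14} assumes and that the present paper supplies in the big-boundary case. Invoking $b$-semiampleness overstates what is needed.
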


For threefolds, the conjecture is answered affirmatively in \cite{Kawamata86, FM00, VZ00, CC}. We remark that for threefolds, such $m$ can be computed explicitly. However, in many cases  finding the optimal bound is a difficult task. 
Furthermore,  in  \cite{Tsuji06, HM06, Takayama06}, Conjecture \ref{c-weak} is verified for varieties of general type and arbitrary dimension, however no explicit expression for $m$ is known. 
Finally, the recent work \cite{BZ14} gives a positive answer under the additional assumption that the general fiber $F$ of the Iitaka fibration is bounded in an appropriate sense (more precisely, the smallest integer $b>0$ such that $|bK_F|\ne 0$ is bounded as well as the middle Betti number of a desingularization of the corresponding degree $b$ cyclic cover $\tilde F \to F$).

In many applications, it is important to study log pairs. However,  boundedness result for log pairs do not hold if we allow arbitrary coefficients. In fact, as observed in \cite{Kollar94}, the right condition for the coefficients of the boundary should be that they satisfy the descending chain condition (DCC). With this assumption, we can generalize the above conjecture to the context of log pairs.  

\begin{conj}[Effective Log Iitaka Fibration Conjecture] \label{c-strong} 
Fix a positive integer $n\in \mathbb{N}$ and a DCC set $I\subset [0,1]\cap \mathbb{Q}$. Then there exists an integer $m>0$ which only depends on $n$ and $I$ such that if $(X,B)$ is  an $n$-dimensional complex projective  log canonical pair such that the coefficients of $B$ are in $I$ and $\kappa (K_X+B)\geq 0$, then $|m(K_X+B)|$ defines a map birational to the Iitaka fibration. 
 \end{conj}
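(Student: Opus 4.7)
The plan is to reduce the Effective Log Iitaka Fibration Conjecture to an effective birationality statement for a big log canonical pair on the base of the Iitaka fibration, via the log canonical bundle formula, and to control the moduli part using a boundedness statement for log Calabi--Yau fibers.

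First I would take a log resolution of $(X,B)$ and, up to birationally modifying $X$, assume that the Iitaka fibration $f\colon X\to Z$ is a morphism with $\dim Z=\kappa(K_X+B)$. Applying the Fujino--Mori canonical bundle formula one writes
\[
K_X+B\sim_{\mathbb{Q}} f^{*}(K_Z+B_Z+M_Z),
\]
where $B_Z$ is the discriminant part, $M_Z$ the moduli part, and $(Z,B_Z+M_Z)$ a generalized log canonical pair. By the results of Ambro and Fujino on the discriminant divisor under the DCC hypothesis on $I$, the coefficients of $B_Z$ lie in a DCC set $I'\subset[0,1]\cap\mathbb{Q}$ depending only on $I$ and $\dim F$, where $F$ is a general fiber. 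Since $f$ is the Iitaka fibration, $K_Z+B_Z+M_Z$ is big on $Z$.

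Second, I would make the moduli part effective. The general fiber $(F,B|_F)$ is a log canonical log Calabi--Yau pair whose coefficients lie in $I$. The target is an integer $b=b(n,I)$ such that $bM_{Z'}$ is Cartier on some birational model $Z'\to Z$, so that $(Z,B_Z+M_Z)$ becomes, after passage to this model, a big log canonical pair with coefficients in a DCC set depending only on $n$ and $I$. Such a $b$ would follow from b-semi-ampleness of the moduli part together with a uniform bound on the Cartier index of the log Calabi--Yau fiber; both would follow from boundedness of the class of log canonical log Calabi--Yau pairs $(F,B|_F)$ with coefficients in $I$, modulo crepant birational equivalence.

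Third, with $(Z,B_Z+M_Z)$ recast as an honest big log canonical pair with DCC coefficients, I would invoke the effective birationality theorem for lc pairs of log general type with DCC coefficients (in the spirit of Hacon--M$^{\rm c}$Kernan--Xu and Birkar) to produce an integer $k=k(\dim Z,I')$ such that $|k(K_Z+B_Z+M_Z)|$ defines a birational map; pulling back by $f$ and clearing denominators gives the desired $m=m(n,I)$ such that $|m(K_X+B)|$ is birational to the Iitaka map. The principal obstacle, and the point at which the present paper stops short of Conjecture~\ref{c-strong} in full generality, is the boundedness step used to control the moduli part: one needs boundedness of log Calabi--Yau pairs $(F,B|_F)$ with only log canonical singularities and DCC coefficients, without any a priori bigness or Fano-type assumption on $B|_F$. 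The main theorem of this paper addresses precisely the klt case with big boundary, which suffices to run the argument when the general fiber is of that type, but extending the boundedness (and the resulting effective b-semi-ampleness) to arbitrary log canonical fibers is the key missing ingredient.
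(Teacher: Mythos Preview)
The statement you address is Conjecture~\ref{c-strong}, which the paper does not prove; it is stated as an open problem, and only the special case of Theorem~\ref{t-bd} (klt, with $B$ big on the general fiber) is established. Your outline follows the standard Fujino--Mori strategy and you correctly isolate boundedness of the log Calabi--Yau fibers as the crux, so as a roadmap it is reasonable.

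There is, however, a genuine gap in how you propose to exploit fiber boundedness. You assert that boundedness of $(F,B|_F)$ would yield both a Cartier index bound for $M_{Z'}$ and b-semi-ampleness of the moduli part, allowing you to recast $(Z,B_Z+M_Z)$ as an ``honest'' log canonical pair with DCC coefficients and then invoke HMX-type effective birationality. Boundedness does deliver the Cartier index bound: it bounds the integer $b$ with $b(K_F+B_F)\sim 0$ and the middle Betti number of the associated cyclic cover, whence $rM_{Z'}$ is Cartier with $\varphi(r)$ bounded (this is exactly the Fujino--Mori argument the paper runs in the proof of Theorem~\ref{t-bd}). But b-semi-ampleness---the ability to replace $M_{Z'}$ by an actual effective divisor with controlled coefficients---is precisely the effective adjunction conjecture of \cite{PS06}, and it does \emph{not} follow from fiber boundedness. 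The paper flags this explicitly in the introduction as the obstruction to your route. Its workaround, in the big-boundary case, is to drop the attempt to make $M_Z$ an honest divisor and instead apply \cite{BZ14} on generalized pairs, which needs only that $(Z',B_{Z'})$ is klt with DCC coefficients (obtained via ACC for log canonical thresholds \cite{HMX14}), that $rM_{Z'}$ is nef and Cartier for bounded $r$, and that $K_{Z'}+B_{Z'}+M_{Z'}$ is big. If you replace your third step by an appeal to \cite{BZ14} rather than to birationality for ordinary pairs, your outline becomes the paper's proof of Theorem~\ref{t-bd}, and the remaining obstruction to the full conjecture is indeed the fiber boundedness you identify.
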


Various low dimensional cases of Conjecture \ref{c-strong} are known (see \cite{Kollar94, Alexeev94, Jiang13, Todorov10, TX09}). In arbitrary dimension, we also have an affirmative answer in the log general type case (cf. \cite{HMX13, HMX14}).  
However, the case of intermediate Kodaira dimension is more complicated and very little is known in arbitrary dimension.
 The well-known strategy, first developed in \cite{FM00}, is to use the canonical bundle formula, which is a far reaching generalization of Kodaira's formula for elliptic fibrations, to reduce the above question to a question about the general fiber and the base pair which is of log general type.  

More precisely, let $f:X\to Z$ be the Iitaka fibration, then the pluricanonical ring $R(K_X+B)$ can be described in terms of the pluricanonical type ring $R(K_Z+B_Z+M_Z)$ where $(Z,B_Z)$ is a klt pair, the coefficients of $B_Z$ belong to an appropriate DCC set and $K_Z+B_Z+M_Z$ is big. Here $B_Z$ denotes the boundary part which measures the singularities of the map $f:(X,B)\to Z$ and $M_Z$ is the moduli part which measures the variation of the fibers of $f:(X,B)\to Z$.
If one can show that $(Z,B_Z+M_Z)$ is also a log canonical pair and the coefficients of $M_Z$ also belong to a DCC  (or finite) set, then the result would follow from the results in \cite{HMX13, HMX14}.
Therefore, the main difficulty of this approach is to control the corresponding moduli divisor $M_Z$.
This is a difficult question. An affirmative answer would follow from the effective adjunction conjectures in \cite[Conjecture 7.13]{PS06}.
In \cite{BZ14}, the authors treat $M_Z$ as a divisor class (instead of an actual divisor) and develop results similar to those of \cite{HMX14} in this context. 

 In this note,  we show that the results of \cite{HMX14}, imply a (strong) boundedness result for klt pairs $(X,B)$ such that $K_X+B\equiv 0$ where $B$ is big.
 
 \begin{thm}\label{t-k=0}
Fix $I\subset [0,1]\cap \mathbb Q$ a DCC set and $n\in \mathbb N$, then there exist $k>0$ which only depends on $I$ and $n$ such that if $(X,B) $ is klt,  $B$ is big, $\dim X =n$, the coefficients of $B$ are contained in $ I$ and $\kappa (K_X+B)=0$, then $h^0(k(K_X+B))=1$.
If moreover $K_X+B\sim _{\mathbb Q }0$, then $(X,B)$ belongs to a bounded family.
\end{thm}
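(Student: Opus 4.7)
The strategy is to perturb $(X, B)$ into a klt pair of log general type and invoke the main boundedness theorem of \cite{HMX14} for lc pairs of log general type with DCC coefficients and bounded volume. I focus on the second assertion (boundedness); the first follows from it via a $(K_X+B)$-MMP.

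For the second assertion: since $K_X + B \sim_\mathbb{Q} 0$ and the coefficients of $B$ lie in the DCC set $I$, the Global ACC theorem of \cite{HMX14} forces those coefficients into a finite subset $J \subset I$ depending only on $n$ and $I$. In particular, there is a gap $\delta_0 = \delta_0(n,I) > 0$ with all coefficients of $B$ at most $1 - \delta_0$, and $B$ is $\mathbb{Q}$-Cartier with bounded denominator $N = N(n, I)$. Since $B$ is big, after possibly enlarging $N$ we may pick a general member $A \in |NB|$. Then for a sufficiently small rational $0 < \epsilon \leq \epsilon_0 = \epsilon_0(n,I)$, where uniformity of $\epsilon_0$ comes from the ACC for log canonical thresholds of \cite{HMX14} combined with the gap $\delta_0$, the pair
\[
(X, \Delta_\epsilon) := (X, B + \tfrac{\epsilon}{N} A)
\]
is klt with coefficients in the finite set $J \cup \{\epsilon/N\}$, and $K_X + \Delta_\epsilon \sim_\mathbb{Q} (\epsilon/N) A \sim_\mathbb{Q} \epsilon B$ is big. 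Thus $(X, \Delta_\epsilon)$ is klt, of log general type, with coefficients in a fixed finite set depending only on $n, I$.

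The remaining step is to bound $\mathrm{vol}(K_X + \Delta_\epsilon) = \epsilon^n (-K_X)^n$ uniformly, which amounts to an upper bound on the anticanonical volume of the underlying Fano type variety $X$. I expect this to be the main obstacle, since DCC for volumes from \cite{HMX14} only gives a lower bound. I would attempt it by a contradiction argument: a hypothetical sequence $(X_i, B_i)$ with $(-K_{X_i})^n \to \infty$ should, after careful choice of perturbations (possibly varying $\epsilon$ with the pair and invoking Global ACC together with ACC for log canonical thresholds on suitable auxiliary divisors), violate either the DCC for volumes or the Global ACC in a fixed DCC class of coefficients. Once the volume is bounded, the boundedness theorem of \cite{HMX14} places $(X, \Delta_\epsilon)$, and hence $(X, B)$, in a bounded family.

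For the first assertion, under only $\kappa(K_X+B) = 0$, I would run a $(K_X+B)$-MMP (which terminates by \cite{BCHM10} since $K_X + B$ is $\mathbb{Q}$-effective) to reach a good minimal model $(X^m, B^m)$ with $K_{X^m} + B^m \sim_\mathbb{Q} 0$; abundance in Kodaira dimension zero is available in this klt setting. A standard verification shows that because the MMP contracts only $(K_X+B)$-negative extremal rays and $B$ is big, the pushforward $B^m$ remains big. Applying the second assertion to $(X^m, B^m)$ yields a uniform bound on the Cartier index of $K_{X^m} + B^m$, which coincides with the Cartier index of $K_X + B$ under birational pushforward of sections, giving $h^0(k(K_X + B)) = h^0(k(K_{X^m} + B^m)) = 1$ for some $k = k(n, I)$.
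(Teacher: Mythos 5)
The gap you yourself flag — bounding $\mathrm{vol}(K_X+\Delta_\epsilon)$, equivalently the anticanonical volume of $X$ — is fatal for this route, and the vague contradiction argument you sketch does not close it. That volume bound is precisely the hard content of \cite[1.7]{HMX14}: a uniform upper bound on $\mathrm{vol}(-K_X)$ for klt Calabi--Yau pairs with DCC coefficients is equivalent to, not a corollary of, the boundedness you are trying to prove, so ``violate the DCC for volumes or Global ACC'' is circular without a concrete mechanism. There is also a secondary unjustified step: you assert that $B$ is $\mathbb{Q}$-Cartier with uniformly bounded denominator $N$ and then take $A \in |NB|$, but the Global ACC controls only the numerical coefficients of $B$, not the Cartier index of $B$ as a Weil divisor class on the possibly non-$\mathbb{Q}$-factorial $X$; bounded Cartier index is again an output of boundedness, not an input.

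The paper avoids both problems by a different reduction. After passing to a good minimal model so that $K_X+B\sim_{\mathbb Q}0$, it takes the log canonical model $h:X'\to Z$ of $K_X+(1+a)B$ for $0<a\ll 1$. On $Z$ one has $-K_Z\sim_{\mathbb Q}h_*B'$ \emph{ample}, so $(Z,h_*B')$ lands squarely in the setting of Proposition~\ref{t-bd1} (i.e.\ \cite[1.7]{HMX14}) and is bounded for free — no separate volume estimate needed. The paper then transports boundedness back to $(X,B)$: since $(Z,h_*B')$ is uniformly $\epsilon$-lt, Proposition~\ref{p-de} produces a bounded family of extractions $X''\to Z$ isomorphic to $X$ in codimension one with crepant boundary, and Proposition~\ref{p-sqfm} (boundedness of small modifications of bounded Fano-type CY pairs) converts this to boundedness of $(X,B)$. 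In short, instead of perturbing into log general type and paying the price of a volume bound, the argument perturbs in the other direction — toward the anticanonical model — where \cite{HMX14} already applies, and then uses the MMP machinery to climb back.
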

 
 As a consequence, by \cite{BZ14}, we also obtain a positive answer for \eqref{c-strong} when the general fibers of the Iitaka fibration have a big boundary.
\begin{thm}\label{t-bd} Fix  a DCC set $I\subset [0,1]\cap \mathbb{Q}$ and $n\in \mathbb N$ then there is an integer $k>0$ depending only on $n$ and $I$ such that if \begin{enumerate}
\item $(X,B)$ is an $n$-dimensional klt pair,
\item the coefficients of $B$ are in $I$,
\item  $ \kappa (K_X+B)\geq  0$ and $(X,B)\dasharrow Z$ is the Iitaka fibration, and
\item if $\eta$ is the generic point of $Z$ then $B_{\eta}$ is big (see Definition \ref{d-generic}), 
\end{enumerate}
 then $|k(K_X+B)|$ is birational to the Iitaka fibration of $(X,B)$.
\end{thm}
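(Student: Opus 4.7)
The plan is to derive Theorem \ref{t-bd} by combining Theorem \ref{t-k=0} with the canonical bundle formula approach described in the introduction and the machinery of \cite{BZ14}. After replacing $X$ by a suitable log resolution, assume the Iitaka fibration $f\colon X\to Z$ is a morphism. A general fiber $(F,B_F)$, where $B_F$ is the restriction of the horizontal part of $B$, is klt with coefficients in $I$, with $B_F$ big by hypothesis (4), and with $\kappa(K_F+B_F)=0$ by construction of the Iitaka fibration. Since a klt pair with big boundary admits a good minimal model (\cite{BCHM10}), after passing to a further birational model we may assume $K_F+B_F\sim_{\mathbb Q}0$.

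Now apply Theorem \ref{t-k=0} to $(F,B_F)$. This places $(F,B_F)$ in a bounded family depending only on $n$ and $I$. In particular, there is an integer $b=b(n,I)$ with $b(K_F+B_F)\sim 0$, and all the topological invariants of $F$ (including those appearing in the hypothesis of \cite{BZ14}, such as the middle Betti number of the associated cyclic cover $\tilde F\to F$) are uniformly bounded.

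With the boundedness of the general fiber established, we invoke \cite{BZ14}. The canonical bundle formula gives
\[
K_X+B\sim_{\mathbb Q}f^*(K_Z+B_Z+M_Z),
\]
where $B_Z$ is the discriminant part with coefficients in a DCC set determined by $n$ and $I$, and $M_Z$ is the moduli part, now controlled as a divisor class thanks to the bounded general fiber. The pair $(Z,B_Z+M_Z)$ is klt and of log general type, so the effective birationality results of \cite{HMX13, HMX14}, as adapted in \cite{BZ14} to deal with $M_Z$ as a class rather than an actual divisor, produce an integer $k=k(n,I)$ such that $|k(K_Z+B_Z+M_Z)|$ is birational onto its image. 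Pulling back, $|k(K_X+B)|$ is birational to the Iitaka fibration, as desired. The main obstacle in this plan is the upgrade from $\kappa(K_F+B_F)=0$ to $K_F+B_F\sim_{\mathbb Q}0$ on the general fiber, which requires a careful birational modification so that the hypotheses of Theorem \ref{t-k=0} are literally satisfied; once this reduction is carried out, the remainder of the proof is an orchestration of the canonical bundle formula, \cite{BZ14}, and \cite{HMX14}.
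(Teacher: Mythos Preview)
Your proposal is correct and follows essentially the same route as the paper: bound the general fiber via Theorem~\ref{t-k=0}, then feed the resulting bounds on $b$ and on the middle Betti number of the cyclic cover into the canonical bundle formula and \cite{BZ14}. The paper handles the reduction you flag as the ``main obstacle'' by invoking \cite[4.4]{Lai11} to replace $(X,B)$ globally by a good minimal model (so $K_X+B$ is semiample and $K_F+B_F\sim_{\mathbb Q}0$ on general fibers automatically), and it is a bit more explicit about verifying the hypotheses of \cite[1.2]{BZ14}: the DCC property of the coefficients of $B_{Z'}$ comes from \cite[1.1]{HMX14}, and the bound on the Cartier index $r$ of $M_{Z'}$ is extracted from the bounded Betti number via the Fujino--Mori argument.
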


 
As an immediate corollary of \eqref{t-k=0}, we obtain the following result which can be used to generalize \cite[Theorem 5.2]{BL14} from birational boundedness to boundedness (see \cite[3.4]{BL14}). 
\begin{cor}\label{c} Fix two positive integers $n$ and $m$. Consider the set of $n$-dimensional weak Fano varieties $X$ with Cartier index $m$, i.e., $X$ is klt, $-K_X$ is big and nef and $-mK_X$ is Cartier. Then the set of  all such $X$ belongs to a bounded family. 
\end{cor}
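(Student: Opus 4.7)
The plan is to reduce Corollary \ref{c} to Theorem \ref{t-k=0} by constructing, on each $X$ in the class, an effective $\mathbb{Q}$-divisor $B$ such that $(X,B)$ is klt, $K_X+B\sim_{\mathbb{Q}}0$, $B$ is big, and the coefficients of $B$ lie in a finite (hence DCC) set depending only on $n$ and $m$. Once such a $B$ is produced uniformly in $X$, Theorem \ref{t-k=0} gives that $(X,B)$ belongs to a bounded family, and forgetting the boundary yields boundedness of $X$.

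To produce $B$, I would first invoke \cite[Theorem 5.2]{BL14}, which provides birational boundedness for precisely this class of varieties. Combined with the fixed Cartier index $m$, this allows the extraction of an integer $a=a(n,m)$ such that $|-amK_X|$ is nonempty --- in fact, defines a birational map --- for every $X$ in the class. Pick a general element $D\in|-amK_X|$ and set $B=\tfrac{1}{am}D$. Then $am(K_X+B)\sim 0$, so $K_X+B\sim_{\mathbb{Q}}0$; the class $B\sim_{\mathbb{Q}}-K_X$ is big; and $B$ has a single coefficient $\tfrac{1}{am}$, which depends only on $n$ and $m$. Observe also that the lower bound $(-K_X)^n\geq 1/m^n$ (from $-mK_X$ being Cartier) together with the upper bound from birational boundedness controls the volume of $-K_X$ in a fixed interval.

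The main obstacle is verifying that $(X,B)$ is klt for a general choice of $D$. If $|-amK_X|$ is basepoint-free, then a Bertini argument closes the matter: a general $D$ avoids the singular locus of $X$ and meets $X_{\mathrm{sm}}$ smoothly, so $(X,\tfrac{1}{am}D)$ is klt because $\tfrac{1}{am}<1$ and $(X,0)$ is already klt. Uniform basepoint-freeness can be arranged by replacing $a$ by a uniform multiple and invoking Kawamata's basepoint-free theorem inside the birationally bounded family furnished by \cite{BL14}; this is the principal technical step, since any non-uniform dependence of $a$ on $X$ would take the coefficient $\tfrac{1}{am}$ out of a fixed DCC set. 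Once klt-ness is secured, Theorem \ref{t-k=0} applied to $(X,B)$ with $I=\{\tfrac{1}{am}\}$ gives that $(X,B)$, and hence $X$, belongs to a bounded family, completing the proof.
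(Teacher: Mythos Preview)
Your overall strategy is exactly right: produce, for each $X$ in the class, a big $\mathbb{Q}$-divisor $B$ with $K_X+B\sim_{\mathbb{Q}}0$, $(X,B)$ klt, and coefficients in a fixed finite set depending only on $(n,m)$, then apply Theorem~\ref{t-k=0}. The paper does precisely this. However, you have missed the one reference that makes the construction immediate, and as a result your argument has a genuine gap at the step you yourself flag as ``the principal technical step.''

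The paper's proof is essentially one line: by Koll\'ar's effective base point freeness \cite{Kollar93}, there is a uniform $N=N(n,m)$ such that $|{-}NK_X|$ is base point free for every $X$ in the class (apply the theorem to the big and nef Cartier divisor $-mK_X$ on the klt variety $X$; the bound depends only on $\dim X$). A general $D\in|{-}NK_X|$ then gives $(X,\tfrac{1}{N}D)$ klt by Bertini, and Theorem~\ref{t-k=0} with $I=\{\tfrac{1}{N}\}$ finishes.

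Your route through \cite[Theorem~5.2]{BL14} is a detour that does not close: you use birational boundedness only to produce sections, and then still need uniform base point freeness, which you propose to extract from ``Kawamata's basepoint-free theorem inside the birationally bounded family.'' As written this is not justified --- Kawamata's theorem gives base point freeness for \emph{some} multiple, and extracting a \emph{uniform} multiple is exactly what an effective result such as \cite{Kollar93} supplies. So either you end up invoking \cite{Kollar93} anyway, in which case the appeal to \cite{BL14} was superfluous, or the argument remains incomplete. There is also a mild awkwardness in citing \cite{BL14} here at all: the stated purpose of Corollary~\ref{c} in the paper is to \emph{upgrade} \cite[Theorem~5.2]{BL14} from birational boundedness to boundedness, so while your use of it is not strictly circular, it inverts the intended logical flow.
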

 \section{Preliminaries}
\begin{notation} We follow the usual notation from \cite{KM98} and \cite{BCHM10}. We say that a pair $(X,B)$ is $\epsilon$-lt ($\epsilon$ log terminal), if its total discrepancy is larger than $-1+\epsilon$. We say that  a log pair $(Y,D)$ with a morphism $f:Y\to S$  is log smooth over a $S$ if every strata of $(Y,{\rm Supp} (D))$ (including $Y$) is smooth over $S$.
\end{notation}

\begin{defn}
Let $X$ be a normal variety, $L$ be a $\mathbb{Q}$-Cartier divisor. If we assume $\kappa(L)\ge 0$, then there is a rational map $\phi: X\dasharrow Z$ such that $\dim Z=\kappa (L)$ and $\kappa (L|_{X_z})=0$ where $z$ is a general point of $Z$. This map is defined (up to birational isomorphism) by $|kL|$ for $k>0$ sufficiently divisible.
We say $\phi$ is the {\it Iitaka fbration} of $L$ (see \cite{Lazarsfeld}). \end{defn}
\begin{defn}\label{d-generic}
For an effective $\mathbb{Q}$-divisor $B$ on $X$ and a rational map $X\dasharrow Z$, we say $B$ is {\it big over the generic point $\eta$} of $Z$, if for some resolution $g:X'\to X$ inducing a morphism $X'\to Z$, the restriction of the divisor $B'=g^{-1}_*B+{\rm Ex}(g)$ to the generic fiber $X'_{\eta}$ is big. Alternatively, this is equivalent to saying  that for such $X'$ there is a relatively big divisor $B'$ over $Z$ satisfying $g_*B'=B$. 
\end{defn}

\subsection{Boundedness}
We say that $\mathcal C$ a collection of log pairs is {\it bounded} if there exists a pair $(\mathcal X ,\mathcal B)$ and a projective morphims $f:\mathcal X \to S$ over a finite type variety $S$ such that any $(X,B)\in \mathcal C$ is isomorphic to the 
fiber $(\mathcal X ,\mathcal B)\times _Z z$ of $f$ for some closed point $z\in Z$.
\begin{prop}\label{p-Qfact} Fix $\epsilon >0$.
Assume $\{(X_i,B_i)\}_i$ is a set of $\epsilon$-klt pairs, which corresponds to the fibers over a dense set of points $s_i\in S$ for a bounded family $(\sX,\mathcal{B})$ over a variety $S$. Then there exists a dense open set $U\subset S$ such that $(K_{\sX}+\mathcal{B})|_{\sX\times_S U}$ is $\mathbb{Q}$-Cartier and klt. 
\end{prop}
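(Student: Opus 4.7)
The plan is to reduce to a well-behaved flat family via generic flatness, then establish $\mathbb{Q}$-Cartierness of $K_{\mathcal{X}/S}+\mathcal{B}$ on a dense open set of $S$ by a Noetherian argument on reflexive powers of $\omega_{\mathcal{X}/S}$, and finally invoke openness of the klt property.

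First, by passing to an irreducible component of $S$ containing a Zariski dense subset of the $s_i$, I may assume $S$ is irreducible and $\{s_i\}$ is Zariski dense in $S$. By generic flatness and openness of normality in families, I would shrink $S$ so that $f:\mathcal{X}\to S$ is flat with geometrically normal fibers and every irreducible component of $\mathcal{B}$ is flat over $S$. Then $K_{\mathcal{X}/S}$ is a well-defined Weil divisor class on $\mathcal{X}$ with $(K_{\mathcal{X}/S}+\mathcal{B})|_{X_s}=K_{X_s}+B_s$ for each $s\in S$.

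The central step is $\mathbb{Q}$-Cartierness of $K_{\mathcal{X}/S}+\mathcal{B}$. For each integer $m\geq 1$ clearing the (rational) denominators in the coefficients of $\mathcal{B}$, form the coherent reflexive sheaf $\mathcal{F}_m:=\omega_{\mathcal{X}/S}^{[m]}(m\mathcal{B})$, and let $G_m\subset S$ be the set of $s$ for which $\mathcal{F}_m|_{X_s}$ is a line bundle on $X_s$. Standard arguments via Fitting ideals show that $G_m$ is constructible, and divisibility gives $G_m\subset G_{m'}$ whenever $m\mid m'$. Since each $(X_i,B_i)$ is $\epsilon$-klt, $K_{X_i}+B_i$ is $\mathbb{Q}$-Cartier, so every $s_i$ lies in some $G_{m_i}$. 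Consequently $\bigcup_n\overline{G_{n!}}\supset\overline{\{s_i\}}=S$, and by Noetherianity of $S$ the ascending chain of closed sets $\overline{G_{n!}}$ stabilizes: $\overline{G_{N!}}=S$ for some $N$. Then $G_{N!}$ is a dense constructible subset of the irreducible variety $S$, hence contains a dense open $U\subset S$. After possibly shrinking $U$ further, $\mathcal{F}_{N!}$ itself is a line bundle on $\mathcal{X}|_U$, so $N!\cdot(K_{\mathcal{X}/S}+\mathcal{B})$ is Cartier on $\mathcal{X}|_U$.

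With $K_{\mathcal{X}/S}+\mathcal{B}$ now $\mathbb{Q}$-Cartier on $\mathcal{X}|_U$, the $\epsilon$-klt (hence klt) condition on fibers is open on $U$ by semicontinuity of log discrepancies in flat families with $\mathbb{Q}$-Cartier log canonical class (cf.~\cite{KM98}); shrinking $U$ once more makes $(\mathcal{X}|_U,\mathcal{B}|_U)$ klt over $U$. The main obstacle is the $\mathbb{Q}$-Cartier step: fiber by fiber one only obtains non-uniform Cartier indices $m_i$, and to produce a single $N$ that works on an open subset one must combine the Zariski density of the $\{s_i\}$ with Noetherian stabilization of the chain $\overline{G_{n!}}$; without this stabilization, a countable union of proper closed subsets could in principle contain the set $\{s_i\}$.
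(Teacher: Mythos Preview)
Your argument has a genuine gap in the $\mathbb{Q}$-Cartier step, precisely at the implication ``$K_{X_i}+B_i$ is $\mathbb{Q}$-Cartier, so $s_i\in G_{m_i}$''. You defined $G_m$ as the locus where the \emph{restriction} $\mathcal{F}_m|_{X_s}=\bigl(\omega_{\mathcal X/S}^{[m]}(m\mathcal B)\bigr)|_{X_s}$ is invertible, but what the $\epsilon$-klt hypothesis gives you is that the \emph{fibrewise} reflexive sheaf $\omega_{X_{s_i}}^{[m_i]}(m_iB_{s_i})$ is invertible. These are different objects: reflexive hulls do not commute with restriction to closed fibres in general, and the equality $\bigl(\omega_{\mathcal X/S}^{[m]}\bigr)|_{X_s}\cong \omega_{X_s}^{[m]}$ is exactly Koll\'ar's condition, which is known to fail (e.g.\ for non-$\mathbb Q$-Gorenstein smoothings of quotient surface singularities). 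For the same reason the monotonicity $G_m\subset G_{m'}$ for $m\mid m'$ is not clear with your definition: $\mathcal F_{m'}$ is the reflexive hull of $\mathcal F_m^{\otimes m'/m}$ on the total space, and restricting that hull to $X_s$ need not give $(\mathcal F_m|_{X_s})^{\otimes m'/m}$. So neither the membership $s_i\in G_{m_i}$ nor the chain $G_{n!}$ is justified, and the Noetherian stabilisation does not get off the ground. (A possible repair is to run your constructibility/Noetherian argument with $H_m:=\{s:\ m(K_{X_s}+B_s)\text{ is Cartier}\}$ instead, deduce that the \emph{generic} fibre lies in some $H_{N!}$, and then use that restriction to the generic fibre is a localisation---hence does commute with reflexive hulls---to conclude $\mathcal F_{N!}$ is invertible over a dense open; but this is not the argument you wrote.)

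The paper sidesteps this base-change issue entirely and uses the $\epsilon$ in a much more direct way. After passing to a log resolution $(\mathcal Y,\mu_*^{-1}\mathcal B+\mathcal E)$ log smooth over $S$, one runs a relative MMP for $K_{\mathcal Y}+\mu_*^{-1}\mathcal B+(1-\epsilon)\mathcal E$ over $\mathcal X$, landing on a $\mathbb Q$-factorial model $\mathcal X'\to\mathcal X$ where $K_{\mathcal X'}+\nu_*\mu_*^{-1}\mathcal B$ is automatically $\mathbb Q$-Cartier. Because each $(X_i,B_i)$ is $\epsilon$-klt, the negativity lemma forces $\nu_*\mathcal E|_{X'_i}=0$, so $X'_i\to X_i$ is small and $(K+B)$-trivial; passing to the relative log canonical model then gives an isomorphism over the dense set $\{s_i\}$, hence over a dense open, and $K_{\mathcal X}+\mathcal B$ is the pushforward of a $\mathbb Q$-Cartier klt class. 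Note that here the uniform $\epsilon$ is what guarantees all exceptional divisors get contracted on every fibre; in your approach the $\epsilon$-klt hypothesis is never really used in the $\mathbb Q$-Cartier step, which is another sign that something is missing.
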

\begin{proof}After shrinking $S$, we can assume that $(\mathcal{X}, \mathcal{B})$ has a log resolution $\mu:\mathcal{Y}\to \sX $ such that $(\mathcal Y, \mu ^{-1}_* \mathcal B +\mathcal E )$ is log smooth over $S$, where $\mathcal{E}$ be the sum of the exceptional divisors. We run a minimal model program for $K_{\mathcal{Y}}+\mu^{-1}_*\mathcal{B}+(1-\epsilon)\mathcal{E}$ over $\mathcal{X}$ which, by \cite{BCHM10}, terminates with a relative minimal model $g:\sX'\to \sX$. Let  $\nu:\mathcal{Y}\dasharrow \mathcal{X}'$ be the induced birational map. Since $K_{\mathcal{X'}}+\nu_*(\mu^{-1}_*(\mathcal{B})+(1-\epsilon)\mathcal{E})$ is nef over $\mathcal X$, it follows that 
$$(K_{\mathcal{X'}}+\nu_*(\mu^{-1}_*\mathcal{B}+(1-\epsilon)\mathcal{E}))|_{X'_i}$$
is nef over $X_i$. Since $(K_{\mathcal{X'}}+\nu_*(\mu^{-1}_*\mathcal{B}+(1-\epsilon)\mathcal{E}))|_{X'_i}-g_i^*(K_{\sX_i}+\sB_i)$ is an effective divisor whose support equals the exceptional divisor $\nu_* \mathcal{E}|_{X'_i}$, by the negativity lemma, it follows that $\nu_* \mathcal{E}|_{X'_i}=0$ and  $X'_i\to X_i$ is a small birational $(K_{\mathcal{X'}}+\nu_*(\mu^{-1}_*\mathcal{B})|_{X'_i}$-trivial morphism.
Let $\nu:\bar \sX \to \sX$ be the relative log canonical model of $K_{\mathcal{X'}}+\nu_*\mu^{-1}_*\mathcal{B}$.
Since $\nu$ contracts all  $K_{\mathcal{X'}}+\nu_*\mu^{-1}_*\mathcal{B}$.-trivial curves, it follows that $\bar \sX  _i=\sX _i$.
Therefore, after possibly shrinking $S$, we may assume that  $\nu_* \mathcal{E}=0$ and $\nu:\bar \sX \to \sX$ is an isomorphism. In particular $K_\sX+\sB=g_*(K_{\mathcal{X'}}+\nu_*\mu^{-1}_*\mathcal{B})$ is $\mathbb Q$-Cartier.
 Finally, we may assume that $K_\sX+\sB$ is klt as the coefficients of the components of $K_{\mathcal{Y}}-\mu^*(K_{\sX}+\mu ^{-1}_* \mathcal B +\mathcal E)$ are larger than $-1$ since this is true after restricting to fibers $\mathcal Y _i\to \sX_i$.
\end{proof}
\begin{prop} \label{p-de} Fix $\epsilon >0$. Let $(X_i,B_i)$ be a bounded family of $\mathbb Q$-factorial $\epsilon$-klt pairs. Then there exists a family $(\mathcal Y , \mathcal D)\to T$ such that for any $i$ and any set of divisors $E_{i,j}$ with $j\in I_i$ such that $E_{i,j}$ is exceptional of discrepancy $a(E_{i,j},X_i,B_i)\leq 0$, then there exists $t\in T$ and a morphism $\mu _t : \mathcal Y _t \to X_i$ which extracts precisely the divisors $E_{i,j}$ with $j\in I_i$  such that $K_{\mathcal Y _t}+  \mathcal D _t=\mu _t^*(K_{X_i}+B_i)$.
\end{prop}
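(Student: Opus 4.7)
The plan is to reduce, by Noetherian induction on the parameter space $S$, to constructing a suitable family of partial resolutions over a dense open subset, and to produce that family by running a relative minimal model program on a simultaneous log resolution. First I would take a bounded family $(\mathcal{X},\mathcal{B})\to S$ realizing the $(X_i,B_i)$ as fibers. After shrinking $S$, Proposition \ref{p-Qfact} lets me assume $K_\mathcal{X}+\mathcal{B}$ is $\mathbb{Q}$-Cartier and klt. Then I would choose a log resolution $\pi:\mathcal{W}\to\mathcal{X}$ with exceptional divisors $\mathcal{E}_1,\dots,\mathcal{E}_N$, and after further shrinking $S$ arrange that $(\mathcal{W},\pi^{-1}_*\mathcal{B}+\sum_j\mathcal{E}_j)$ is log smooth over $S$, each $\mathcal{E}_j$ restricts to an irreducible prime divisor on every fiber, and the discrepancies $a_j=a(\mathcal{E}_j,\mathcal{X},\mathcal{B})$ are independent of $s$.

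The crucial intermediate step is to ensure that the $\mathcal{E}_j$ with $a_j\leq 0$ account for \emph{every} exceptional divisor over $X_s$ with discrepancy $\leq 0$, for every $s$ in an open subset of $S$. For a single klt pair such divisors form a finite set, extracted by some small $\mathbb{Q}$-factorial modification via \cite{BCHM10}; I would enlarge $\pi$ at the generic point of $S$ to extract all of them there, then spread out, shrinking $S$ so that the property persists fiberwise.

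For each subset $I\subseteq\{j:a_j\leq 0\}$, I would then run a relative MMP over $\mathcal{X}$ on $(\mathcal{W},\Delta_I)$ with
\[
\Delta_I=\pi^{-1}_*\mathcal{B}+\sum_{j\in I}(-a_j)\mathcal{E}_j+\sum_{j\notin I}\mathcal{E}_j.
\]
A direct computation gives $K_\mathcal{W}+\Delta_I\equiv_\mathcal{X}\sum_{j\notin I}(1+a_j)\mathcal{E}_j$, where each $1+a_j>\epsilon$ by the $\epsilon$-klt hypothesis. By \cite{BCHM10} the relative MMP terminates with a model $\mu^I:\mathcal{Y}^I\to\mathcal{X}$ contracting precisely the $\mathcal{E}_j$ for $j\notin I$, and after another shrinking of $S$ each MMP step commutes with restriction to fibers. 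Setting $\mathcal{D}^I=(\mu^I)^{-1}_*\mathcal{B}+\sum_{j\in I}(-a_j)\mathcal{E}'_j$ with $\mathcal{E}'_j$ the strict transform, on each fiber $\mu^I_s$ extracts exactly $\{\mathcal{E}_j|_{W_s}:j\in I\}$ with $K_{\mathcal{Y}^I_s}+\mathcal{D}^I_s=(\mu^I_s)^*(K_{X_s}+B_s)$. The desired $(\mathcal{Y},\mathcal{D})\to T$ is the disjoint union of these families over the finitely many choices of $I$, combined with the output of Noetherian induction on the complements.

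The main obstacle is the intermediate step: guaranteeing that every divisor of discrepancy $\leq 0$ over each fiber appears on the simultaneous resolution, not merely the generic one. This relies on constancy of discrepancies in log smooth families together with a semicontinuity argument enabled by the boundedness of the input family. Once this is in place, the relative MMP in families is routine.
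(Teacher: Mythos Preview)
Your approach is correct and matches the paper's: Noetherian induction, Proposition~\ref{p-Qfact}, a simultaneous log resolution, and then extraction of each subset of non-positive-discrepancy divisors via \cite{BCHM10} (the paper simply cites \cite[1.4.3]{BCHM10} rather than spelling out the MMP you describe). The step you flag as the main obstacle is handled in the paper by arranging the crepant pair $(\mathcal{X}',\mathcal{D}')$ on the resolution to be \emph{terminal}; then any divisor of discrepancy $\leq 0$ over $(\mathcal{X},\mathcal{B})$---and, since log-smoothness over $S$ makes each fiber $(\mathcal{X}'_i,\mathcal{D}'_i)$ terminal too, over each $(X_i,B_i)$---must already lie on $\mathcal{X}'$, so no extra spreading-out or semicontinuity argument is needed.
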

\begin{proof} Let $(\mathcal X,\mathcal B)\to S$ be a family parametrizing the $(X_i,B_i)$, so that $(X_i,B_i)\cong (\sX,\sB)\times _X {s_i}$ for some $s_i\in S$.  Replacing $S$ by the closure of the $s_i$, we may assume that the points $\{s_i \}$ corresponding to the pairs $(X_i,B_i)$ form a dense  subset in $S$. By Noetherian induction it suffices to  prove the claim over any non-trivial open subset of $S$.
By Proposition \ref{p-Qfact}, we  may assume that $K_{\mathcal{X}}+\mathcal{B}$ is $\mathbb{Q}$-Cartier and klt.   Let 
$\nu:\mathcal X '\to \mathcal X$ be a log resolution and write 
$$K_{\mathcal X'}+\mathcal D'=\nu ^*(K_{\mathcal X}+\mathcal B)+\mathcal E,$$ where $\mathcal D'$ and $\mathcal E$ are effective with no common components. 
By standard arguments, after a dominant base change,  we may assume that $(\mathcal X',\mathcal D')\to S$ is log smooth with geometrical irreducible strata over $S$, terminal and the $(X_i,D_i)$ correspond to a dense subset of points $s_i\in S$ and the induced morphism from the corresponding fiber $\nu _i:(X'_i,D'_i):=(\mathcal X',\mathcal D')\times _S s_i\to (X_i,B_i)$ is a log resolution.

Note that the divisors of discrepancy $a(E,X_i,B_i)\leq 0$ are in one to one correspondence with divisors $\mathcal E$ over $\mathcal X$ such that $a({\mathcal E },\mathcal X,\mathcal B)\leq 0$ and all of these divisors are divisors on $\mathcal X '$. Denote this set of divisors by $\mathcal F$.
By \cite[1.4.3]{BCHM10}, for any subset $\mathcal F '\subset  \mathcal F$ there exists a birational map $\mathcal X '\dasharrow \mathcal X _{\mathcal F '}$ over $\mathcal X$ such that $\mathcal X _{\mathcal F '}\to \mathcal X$ extracts precisely the divisors on $\mathcal F '$. There exists a dense open set $U$ such that for any $t\in T$, $(\mathcal{X}_{\mathcal{F}})_t\to X_t$  extracts precisely the divisor $(\mathcal{E}_i)_t$ for $i\in \mathcal{F}'$. 
Letting $\D'$ be the pushforward of $\D'_{\mathcal{F'}}$ on  $\sX_{\mathcal{F}'}$, defining
$$(\mathcal{Y},\D)|_U\cong (\sX_{\mathcal{F}'},\D'_{\mathcal{F}}) \times_S U, $$  and continuing  by Noetherian induction, we obtain the required family $(\mathcal Y , \mathcal D)\to T$. \end{proof}

\begin{defn}A projective normal variety $X$ is of {\it Fano type}, if there exists an effective big $\mathbb{Q}$-divisor $B$ such that $K_X+B\sim_{\mathbb{Q}}0$ and $(X,B)$ is klt. 
\end{defn}

If $X$ is of Fano type, then for any ample divisor $A$, there exists a sufficiently small rational number $\epsilon>0$, such that $B-\epsilon A\sim_{\mathbb{Q}}G\geq 0$ as $B$ is big. For $0<\delta \ll 1$, let $B'=(1-\delta)B+\delta G$. It follows easily that  $(X,B')$ is klt and since 
$$-(K_X+B')\sim _{\mathbb Q} \delta \epsilon A,$$the pair $(X,B')$ is a log Fano variety. In particular, if $X$ is also $\mathbb Q$-factorial, then by \cite[1.3.2]{BCHM10}, $X$ is a Mori Dream space. Thus the number of SQM (small $\mathbb Q$-factorial modifications) of $X$ is bounded, and each SQM  corresponds to a maximal dimensional chamber of ${\rm Mov}(X)$.

\begin{lem}\label{l-inv}
Let $f:\mathcal{X}\to S$ be a projective family over an affine variety $S$, such that the geometric generic fiber is of Fano type. Then there is a dominant map  $T\to S$ such that for any line bundle $L$ on $\sX$ and any $t\in T$, then for sufficiently divisible $m$, the map
$$f_*( L^{\otimes m})\to H^0(X_t:=\sX\times_S\{t\}, L^{\otimes m}|_{X_t})$$
is surjective. 
\end{lem}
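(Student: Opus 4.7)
The plan is to reduce, via a relative minimal model program over $T$, to a situation where Kawamata--Viehweg vanishing and cohomology--and--base--change directly yield the claimed surjection. The base change $T \to S$ will be chosen so that a Fano type boundary on the geometric generic fiber $\sX_{\bar\eta}$ spreads out to a relative log Fano structure over $T$.

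As the initial step, after a dominant $T \to S$ I would extend a Fano type boundary $\mathcal{B}_{\bar\eta}$ on $\sX_{\bar\eta}$ (provided by the Fano type hypothesis on the geometric generic fiber) to a boundary $\mathcal{B}$ on $\sX_T$ with $(\sX_T,\mathcal{B})$ klt, $K_{\sX_T}+\mathcal{B}\sim_{\mathbb Q,T}0$, and $\mathcal{B}$ big over $T$. This is a standard spreading-out argument, and Proposition \ref{p-Qfact}, applied to the trivially bounded single family, then ensures after shrinking $T$ that $K+\mathcal{B}$ is relatively $\mathbb Q$-Cartier and klt. The perturbation trick recalled just before Lemma \ref{l-inv} (take an ample $A$, write $\mathcal{B}-\epsilon A \sim_{\mathbb Q} G \ge 0$, and set $\mathcal{B}'=(1-\delta)\mathcal{B}+\delta G$) then produces a relatively log Fano klt pair $(\sX_T,\mathcal{B}')$ with $-(K+\mathcal{B}')$ $f$-ample. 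Relative BCHM now applies freely.

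Given $L$ on $\sX$, I would pull back to $\sX_T$ and run the relative $L$-MMP over $T$, equivalently the $(K+\mathcal{B}'+\eta L)$-MMP for $0<\eta\ll 1$, which terminates by \cite{BCHM10}. If $L|_{\sX_{\bar\eta}}$ is not pseudo-effective, the MMP ends in a Mori fiber space and $H^0(X_t, L^{\otimes m}|_{X_t})=0$ for $m$ sufficiently divisible, so the surjection is trivial. Otherwise we obtain a minimal model $\pi : \sX_T \dashrightarrow \sX'_T$ over $T$ on which the strict transform $L'$ is $f'$-nef, hence $f'$-semi-ample by the relative base point free theorem; for $m$ sufficiently divisible, $mL'$ is $f'$-free. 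On each fiber $X'_t$, the divisor $mL'|_{X'_t} - (K+\mathcal{B}')|_{X'_t}$ is the sum of the nef $mL'|_{X'_t}$ and the ample $-(K+\mathcal{B}')|_{X'_t}$, hence nef and big, so Kawamata--Viehweg vanishing gives $H^i(X'_t, mL'|_{X'_t})=0$ for $i>0$. Cohomology and base change then yields the surjectivity of $f'_*(L'^{\otimes m})\otimes k(t)\to H^0(X'_t, L'^{\otimes m}|_{X'_t})$.

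Finally I would transfer the conclusion back to $\sX$: the $\pi$-exceptional divisors are contained in the stable base locus of $|mL|$ for divisible $m$ (they have negative $L$-intersection in the MMP steps), so $f_*(L^{\otimes m})=f'_*(L'^{\otimes m})$ and $H^0(X_t, L^{\otimes m}|_{X_t})=H^0(X'_t, L'^{\otimes m}|_{X'_t})$, giving the claimed surjection. The main obstacle is ensuring the conclusion at every $t\in T$ rather than only over a dense open subset: I expect to handle this by Noetherian induction, replacing the closed complement of the good locus by a further dominant base change and iterating the construction. A secondary technical issue is the uniform spreading out of the Fano type boundary in the first step, which is standard once Proposition \ref{p-Qfact} is in hand.
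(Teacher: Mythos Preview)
Your approach is genuinely different from the paper's, but it has a real gap in the quantifier structure. The statement demands a \emph{single} dominant $T\to S$ that works for \emph{every} line bundle $L$ on $\sX$. In your argument the MMP you run, the model $\sX'_T$ you land on, and the open locus of $T$ over which that model behaves well fiberwise all depend on $L$. Your proposed Noetherian induction does not repair this: you cannot stratify $T$ once and for all when the bad locus itself varies with $L$. One might hope to salvage the argument by invoking the Mori dream space structure (only finitely many birational models arise, so only finitely many shrinkings are needed), but making that precise in families---matching the relative Mori chambers with the fiberwise ones and controlling every fiber---is essentially the content of Proposition~\ref{p-mov}, which in the paper is proved \emph{using} Lemma~\ref{l-inv}. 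So you would be arguing in a circle.

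There is a second, related circularity. Your claim that $H^0(X_t, L^{\otimes m}|_{X_t})=H^0(X'_t, L'^{\otimes m}|_{X'_t})$ rests on the $\pi$-exceptional divisors lying in the stable base locus. That holds on the total space, but to deduce it fiberwise you would need the stable base locus of $|mL_t|$ to be the restriction of the stable base locus of $|mL|$---which is precisely the surjectivity of the restriction map you are trying to prove.

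The paper sidesteps both issues. After a single $L$-independent base change making a fixed log resolution $(\sX',\mathcal D')$ of $(\sX,\sB)$ log smooth over the base, it absorbs an arbitrary $L$ into the ample part of the boundary via $\sA_\lambda=\sA+\lambda L$ for $0<\lambda\ll 1$, so that $K_{\sX}+\sB+\sA_\lambda\sim_{\mathbb Q}\lambda L$, and then applies the extension theorem \cite[1.8]{HMX13} directly to $K_{\sX'}+\mathcal D'+\psi^*\sA_\lambda$. No MMP is run per $L$, and no further shrinking of the base is required once the log smooth model is fixed.
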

\begin{proof}After replacing $S$ by $T$, we can assume that $S$ is affine and there is a big boundary $\mathcal{B}'$, such that $K_{\sX}+\mathcal{B}'\sim_{\mathbb{Q}}0$ and $(\sX,\mathcal{B}')$ is klt. We may further assume that $\mathcal{B}'\sim _{\mathbb Q} \sA+\sB$ where $\sA$ is ample, $(\sX,\sB)$ is klt and that $(\sX_t,\sB_t)$ is klt for any $t\in S$.  We take a log resolution $\psi:\mathcal{X}'\to \sX$ of $(\sX,\sB)$ and write
$$\psi^*(K_{\sX}+\sB)+\mathcal{E}=K_{\sX'}+\D',$$
where $\mathcal{E}$, $\D'$ are effective and do not have common components. We can assume that $(\sX',\D')$ is terminal.  By shrinking $S$, we may assume that $(\sX',\D')$ is log smooth over $T$ and for any $t\in S$, $\sX'_t\to \sX_t$ is a log resolution of $(\sX_t,\sB_t)$.

Let $L $ be a line bundle on $\sX$, then for any rational number $0<\lambda\ll 1$, the $\mathbb Q$-divisor $\sA_{\lambda}:=\sA+\lambda L$ is ample. By \cite[1.8]{HMX13},  for sufficiently divisible $m$, the morphism
$$H^0(\sX', \mathcal O _{\sX'}(m(K_{\sX'}+\D'+\phi^*\sA_{\lambda})))\to H^0(X'_t,\mathcal O_{X'_t} (m(K_{\sX'}+\D'+\phi^*\sA_{\lambda})|_{X'_t}))$$
is surjective. Since $\psi _* ( K_{\sX'}+\D'+\phi^*\sA_{\lambda})=K_{\sX}+\sB +\sA_{\lambda}\sim _{\mathbb Q} \lambda L$, this implies that
$$H^0(\sX, mL)\to H^0(X_t, mL|_{X_t})$$
is surjective for any $m>0$ sufficiently divisible. 
\end{proof}

\begin{prop}\label{p-mov}
Let $\mathcal{X}/S$ be a projective family, such that $S$ is irreducible and the geometric generic fiber is of Fano type.
Then there exists a finite dominant morphism $T\to S$, such that if  $\mathcal{X}_T= \mathcal{X}\times_ST$ then for any $t\in T$
\begin{enumerate}
\item the morphism $\rho_t:N^1(\sX_ T/T)\to N^1(\sX_t)$ is an isomorphism, 
\item $\rho_t({\rm Mov}(\sX_T/T))={\rm Mov}(\sX_t)$ and there is a one-to-one correspondence between the two Mori chamber decompositions.  
\end{enumerate}
\end{prop}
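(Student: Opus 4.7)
The approach is to use that each Fano-type fiber is a Mori dream space, to spread out this structure so $\sX_T$ becomes a relative MDS over $T$ after a suitable finite base change, and then to show that restriction to each fiber is compatible with the chamber decomposition. The two main tools are Proposition \ref{p-Qfact} (to obtain $\mathbb Q$-factoriality and kltness in families) and Lemma \ref{l-inv} (to transfer sections from the total space to fibers).

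After an initial finite dominant base change $T\to S$, I may assume by Proposition \ref{p-Qfact} that $\sX_T$ is $\mathbb Q$-factorial and carries an effective big $\mathbb Q$-divisor $\sB$ with $(\sX_T,\sB)$ klt and $K_{\sX_T}+\sB\sim_{\mathbb Q,T}0$. Writing $\sB\sim_{\mathbb Q}\sA+\sB'$ with $\sA$ relatively ample and perturbing, relative MMP on the various log Fano pairs $K_{\sX_T}+\sB'+\epsilon\sA$ yields finitely many relative SQMs $\sX_T\dashrightarrow \sX_T^{(i)}$ over $T$, one for each maximal-dimensional chamber of $\mathrm{Mov}(\sX_T/T)$; this is the relative Mori dream space structure recalled before Lemma \ref{l-inv}. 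Using Proposition \ref{p-de} I may further assume all exceptional divisors needed to see these chambers are already present on $\sX_T$.

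For (1), a Fano-type variety $F$ satisfies $H^i(F,\mathcal O_F)=0$ for $i>0$ by Kawamata--Viehweg vanishing, so $\mathrm{Pic}(F)=N^1(F)$ is finitely generated and the relative Picard scheme $\mathrm{Pic}(\sX_T/T)$ is \'etale over $T$ with trivial $\mathrm{Pic}^0$. After replacing $T$ by a further finite cover that trivializes the monodromy on this \'etale group scheme, every class in each fiber $N^1(\sX_t)$ lifts uniquely to a global class on $\sX_T$, so $\rho_t$ is an isomorphism for every $t\in T$. For (2), Lemma \ref{l-inv} applied to representative line bundles inside each of the finitely many chambers gives, for $m$ sufficiently divisible, surjective restriction maps
\[
H^0(\sX_T,\,mL)\longrightarrow H^0(\sX_t,\,mL|_{\sX_t}).
\]
This forces the relative stable base locus of $mL$ to restrict to the stable base locus on $\sX_t$, so $\rho_t(\mathrm{Mov}(\sX_T/T))\subset\mathrm{Mov}(\sX_t)$; the reverse inclusion follows from the surjectivity in (1). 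The chamber of $L$ is determined by which SQM $\sX_T^{(i)}$ realizes the ample model of $L$, and each such SQM restricts (by Proposition \ref{p-Qfact} applied fiberwise) to a small $\mathbb Q$-factorial modification of $\sX_t$; this gives the claimed bijection of Mori chamber decompositions.

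The main obstacle is ensuring the good behaviour holds for \emph{every} closed point $t\in T$, not merely for generic $t$: specifically that every relative SQM restricts to a small, $\mathbb Q$-factorial modification of $\sX_t$ and that $N^1(\sX_t)$ does not jump. Both issues are handled by Noetherian induction on $T$---Propositions \ref{p-Qfact} and \ref{p-de} give the statements over a dense open set, and one iterates on the closed complement, taking $T$ to be the disjoint union of the covers constructed at each stratum. The delicate technical content lies in this bookkeeping, together with the fibral section control provided by Lemma \ref{l-inv}.
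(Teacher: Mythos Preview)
Your overall strategy---reduce to a $\mathbb Q$-factorial relative Mori dream space, use Lemma~\ref{l-inv} to control sections fiberwise, and read off the chamber decomposition from the finitely many relative SQMs---is exactly the paper's approach. Two points deserve correction, and one difference is worth noting.

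\textbf{Misattributions.} Proposition~\ref{p-Qfact} only gives that $K_{\sX_T}+\sB$ is $\mathbb Q$-Cartier and klt over an open set; it does \emph{not} make $\sX_T$ itself $\mathbb Q$-factorial. The paper handles this by passing to a small $\mathbb Q$-factorialization $\sX^Q\to\sX$ and checking that $N^1$ and $\mathrm{Mov}$ are unchanged under small morphisms; you should do the same. Your invocation of Proposition~\ref{p-de} is spurious: SQMs are isomorphisms in codimension one and extract no divisors, so there are no ``exceptional divisors needed to see these chambers.'' Simply delete that sentence.

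\textbf{Reverse inclusion in (2).} Saying the inclusion $\mathrm{Mov}(\sX_t)\subset\rho_t(\mathrm{Mov}(\sX_T/T))$ ``follows from the surjectivity in (1)'' is not enough: surjectivity of $\rho_t$ only lifts a movable class $L_t$ to \emph{some} class $L$ on $\sX_T$, and you must still argue $L$ is movable. The paper does this using Lemma~\ref{l-inv} again: if the stable base locus $\mathbf B(\sX_T,L)$ contained a divisor, section surjectivity would force $\mathbf B(\sX_t,L_t)$ to contain its restriction, contradicting movability of $L_t$. You have all the ingredients; just state the argument.

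\textbf{A genuine difference.} For (1), the paper simply cites \cite[6.6]{dFH11} to get the isomorphisms $N^1(\sX_T/T)\to N^1(\sX_t)$ after base change. Your route via Kawamata--Viehweg vanishing ($H^i(\mathcal O_F)=0$, hence $\mathrm{Pic}=N^1$ discrete and the relative Picard scheme \'etale, then kill monodromy by a finite cover) is a correct and more self-contained alternative, and is in fact essentially what underlies the cited result.
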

\begin{proof}

After possibly replacing $S$ by an appropriate dominant base change, we may assume that $S$ is affine, there is a big  boundary $\mathcal{B}$ over $S$ such that $(\mathcal{X},\mathcal{B})$ is a klt pair, projective over $S$, each fiber   $(\mathcal{X}_s,\mathcal{B}_s)$ is a klt pair and   
$$K_{\sX}+\mathcal{B}\sim_{\mathbb{Q}}0.$$ 
We may assume that a  $\mathbb{Q}$-factorialization  $\nu: \sX^{Q}\to \sX$ is a small morphism which remains small when restricted to any fiber $\sX^{Q}_t\to \sX_t$.  We claim that $N^1(\mathcal X /T)$ and ${\rm Mov }(\mathcal X /T)$ are unchanged under small morphisms (and a similar statement holds for $N^1(\mathcal X _t)$ and ${\rm Mov }(\mathcal X _t)$). Since any divisor (as well as linear equivalence between divisors) is determined by its restriction to a big open subset,  the claim about
${\rm Mov }(\mathcal X /T)$ is clear. Notice that if $D,D'$ are divisors on $\sX$ such that $[D]=[D']$ in  $N^1(\mathcal X /T)$, then $D-D'$ is a $\mathbb Q$-Cartier divisor such that $D-D'\equiv _T0$. But then
it is clear that $\nu ^{-1}_*(D-D')=\nu ^*(D-D') \equiv _T0$. Therefore, we can assume that $\sX$ is $\mathbb{Q}$-factorial. 
By \cite[6.6]{dFH11}, after a base change, we can assume that for any $t\in S$, the morphisms
$$N^1(\sX/S)\to N^1(\sX_t) \mbox{ \  and \ } N_1(\sX/S)\to N_1(\sX_t)$$
are isomorphisms.  
In particular,  $\sX_t$ is $\mathbb{Q}$-factorial. This gives (1).

By Lemma \ref{l-inv}, we can assume that (after a possible base change), 
$$\rho_t: {\rm PSEFF}(X/S)\to  {\rm PSEFF}(\sX_t)$$
is an isomorphism for every $t\in S$ (here $ {\rm PSEFF}(X/S)$ denotes the relative pseudo-effective cone i.e. the closure of the relative big cone). Let $\sX_1, \sX_2,\ldots ,\sX_k$ be the $\mathbb{Q}$-factorial birational models corresponding to the maximal dimensional chambers of the Mori chamber decomposition of ${\rm Mov}(\sX/S)$.  Each $\sX_i$ is also of Fano type, in particular, it is a Mori dream space (see \cite[1.3.2]{BCHM10}). The nef cones ${\rm Nef}(\sX_i/S)$ ($1\le i \le k$) are finitely generated polyhedral cones and  give the Mori chamber decomposition of ${\rm Mov}(\sX/S)$.

By shrinking $S$, we assume that for any $t\in S$, any $1\le i \le k$, $\sX_t\dasharrow (\sX_i)_t$ is a small birational map, which implies 
$$\rho_t({\rm Mov}(\sX/S))\subset {\rm Mov}(\sX_t).$$

Furthermore, by Lemma \ref{l-inv}, we can assume that for any $L$ on $\sX$, for any sufficiently divisible integer $d>0$
$$H^0(\sX,dL)\to H^0(\sX_{t}, dL|_{\sX_{t}}), $$
is surjective for any $t\in S$. 

 If there is a movable divisor class $L_t$ on $X_{t}$, then by our assumption it is the restriction of an effective divisor $L$ on $\sX$. 
 If $L$ is not movable, then the stable base locus ${\bf B}(\sX, L)$ contains a divisor, which implies that ${\bf B}(\sX_t,L|_{\sX_t})$ contains a divisor (since we have assumed that $H^0(\sX,dL)\to H^0(\sX_{t}, dL|_{\sX_{t}})$ is surjective). But this contradicts the assumption that $L_t$ is movable. Therefore, $L$ is movable on $\sX$ and so $\rho_t({\rm Mov}(\sX/S))= {\rm Mov}(\sX_t)$.


 
Replacing $S$ by an open subset, we may assume that for any $t\in S $ and $1\le i\le k $ the $\sX_{i,t}$  are mutually not isomorphic, so that the cones ${\rm Nef}(\sX_{i,t})$ give different Mori chambers in ${\rm Mov}(\sX_t)$. This completes the proof of (2).
\end{proof}

\begin{prop}\label{p-sqfm} Let $(X_i,B_i)$ be a bounded family of $\mathbb Q$-factorial klt pairs such that $K_{X_i}+B_i\sim _{\mathbb Q}0$ and $B_i$ is big. Then the set of all small 
modifications of the $X_i$ is also bounded. \end{prop}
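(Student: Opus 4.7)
The plan is to combine Noetherian induction on the parameter space with Proposition \ref{p-mov}. Since the $(X_i,B_i)$ are assumed to form a bounded family, I start from a family $(\mathcal{X},\mathcal{B})\to S$ over a finite-type base such that each $(X_i,B_i)$ occurs as a fiber over some $s_i\in S$. Replacing $S$ by the closure of $\{s_i\}$, I may assume this subset is dense in $S$; then by Noetherian induction it suffices to prove the claim after restricting to a non-empty open subset of each irreducible component of $S$, so I reduce to the case where $S$ is irreducible and $\{s_i\}$ is dense.

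Next I would verify the hypotheses of Proposition \ref{p-mov}. By Proposition \ref{p-Qfact}, after shrinking $S$ I may assume $K_{\sX}+\mathcal{B}$ is $\mathbb{Q}$-Cartier and klt. Since the dense set of fibers $(X_i,B_i)$ satisfies $K_{X_i}+B_i\sim_{\mathbb{Q}}0$ with $B_i$ big, a standard spreading-out argument — propagating relative linear equivalences to the generic fiber using Lemma \ref{l-inv} applied to the $\mathbb{Q}$-line bundle associated to $K_{\sX}+\mathcal{B}$ — lets me further shrink $S$ so that $K_{\sX}+\mathcal{B}\sim_{\mathbb{Q},S}0$ and $\mathcal{B}$ is relatively big. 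In particular, the geometric generic fiber of $\sX\to S$ is of Fano type, so Proposition \ref{p-mov} applies.

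Applying that proposition, after a finite dominant base change $T\to S$ the family $\sX_T\to T$ carries finitely many $\mathbb{Q}$-factorial birational models $\sX_1,\ldots,\sX_k\to T$, one for each maximal dimensional chamber of $\operatorname{Mov}(\sX_T/T)$. Part (2) of the proposition gives that these chambers pull back to the maximal chambers of $\operatorname{Mov}(X_t)$ at every $t\in T$, and Proposition \ref{p-mov}(1) combined with the Fano-type/Mori dream space property identifies SQMs of $X_t$ with those maximal chambers. Consequently every small $\mathbb{Q}$-factorial modification of $X_t$ is isomorphic to the fiber $(\sX_j)_t$ for some $j$. The disjoint union $\bigsqcup_{j=1}^k \sX_j \to T$ therefore parameterizes every SQM of every fiber $X_t$ for $t$ in the image of $T\to S$, which contains a dense open subset of $S$; combining with the Noetherian induction completes the argument.

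In this approach the only real work is the bookkeeping needed to reach the setting in which Proposition \ref{p-mov} applies directly — in particular arranging $K_{\sX}+\mathcal{B}\sim_{\mathbb{Q},S}0$ on the total space rather than just fiberwise on a dense set of points, and matching the discrete invariants so that the same finite list $\sX_1,\ldots,\sX_k$ captures every SQM of every fiber. Once this is done, Proposition \ref{p-mov} does essentially all the geometric work and the conclusion is immediate; I do not anticipate any further substantive obstacle.
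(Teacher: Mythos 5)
Your argument covers only the first half of the proposition: it shows that the small \emph{$\mathbb{Q}$-factorial} modifications of the $X_i$ are bounded, but the statement asks for all small modifications, and these need not be $\mathbb{Q}$-factorial. The paper explicitly splits the proof into two steps for this reason. After establishing boundedness of the SQMs exactly as you do (via Proposition \ref{p-mov}), the paper then takes an arbitrary small modification $X_i'$ of $X_i$, observes that $(X_i',B_i')$ is still klt with $B_i'$ big, passes to a $\mathbb{Q}$-factorialization $X_i''\to X_i'$ (which is an SQM of $X_i$, hence a fiber of some $\sX_p$ by the first step), and then notes that the contraction $X_i''\to X_i'$ is determined by a face of ${\rm Nef}(X_i'')\cong{\rm Nef}(\sX_p)$. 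Choosing a general big $\mathbb{Q}$-divisor $L$ in the relative interior of the corresponding face of ${\rm Nef}(\sX_p)$ — big by Lemma \ref{l-inv} — yields a morphism $\sX_p\to\sX_p'$ over $S$ whose fiber over $s_i$ recovers $X_i'$, and since there are only finitely many faces there are only finitely many such $\sX_p'$. Without this second step your disjoint union $\bigsqcup_j \sX_j\to T$ misses all the non-$\mathbb{Q}$-factorial small modifications, so the conclusion does not follow.

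A secondary issue: to reduce to the hypotheses of Proposition \ref{p-mov} you invoke Lemma \ref{l-inv} in order to spread out $K_{X_i}+B_i\sim_{\mathbb{Q}}0$ and bigness of $B_i$ from a dense set of fibers to the total space. But Lemma \ref{l-inv} already assumes that the geometric generic fiber is of Fano type, which is precisely what you are trying to establish at that point, so as written the argument is circular. The paper avoids this by applying deformation invariance of plurigenera \cite[1.8]{HMX13} to a log smooth model $(\sX',\mathcal D')\to S$ — a statement that requires only log smoothness, not a Fano-type hypothesis — to conclude that $K_{\sX_\eta}+(1+a)\mathcal B_\eta$ is big and $K_{\sX_\eta}+\mathcal B_\eta\sim_{\mathbb{Q}}0$ over the generic point, and only then shrinks $S$ to get $K_{\sX}+\mathcal{B}\sim_{\mathbb{Q}}0$ and relative bigness of $\mathcal{B}$. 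You should replace your appeal to Lemma \ref{l-inv} at this stage by that invariance-of-plurigenera argument.
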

\begin{proof} Let  $(\sX,\mathcal{B})\to S$ be a bounded family which parametrizes all $(X_i,B_i)$.
By Noetherian induction, it suffices to prove the assertion over some non-empty open subset of $S$ such that the points parametrizing $(X_i,B_i)$ form a dense set in $S_i$.

We first only consider small $\mathbb{Q}$-factorial modifications. 
We may assume that $S$ is affine.

Fix $a>0$ such that $K_\mathcal X+(1+a)\mathcal B$ is klt. Let $\nu:\mathcal X '\to \mathcal X$ be a log resolution of $(\mathcal X,\mathcal B)$ and write 
$$K_{\mathcal X'}+\mathcal D'=\nu ^*(K_{\mathcal X}+(1+a)\mathcal B)+\mathcal E$$ 
where $\mathcal D'$ and $\mathcal E$ are effective with no common components. 
By standard arguments, we may assume that $(\mathcal X',\mathcal D')\to S$ is log smooth.
By \cite[1.8]{HMX13}, if $m\mathcal D'$ is integral, then 
$$h^0(\mathcal O_{\mathcal X_s}(m(K_{\mathcal X}+(1+a)\mathcal B)|_{\sX_s}))=h^0(\mathcal O_{\mathcal X'_s}(m(K_{\mathcal X'}+\mathcal D')|_{\mathcal{X}'_s}))$$ 
is locally constant for $s\in S$. Therefore $K_{\mathcal X_\eta}+(1+a)\mathcal B_\eta$ is big where $\eta$ is the generic point of any component of $S$. By a similar argument, we also see that $$K_{\mathcal X_\eta}+\mathcal B_\eta\sim _{\mathbb Q}0.$$ Therefore $\mathcal B_\eta$ is big. By shrinking $S$, we can assume that $K_{\sX}+\mathcal{B}\sim_{\mathbb{Q}}0$. 

By Proposition \ref{p-mov}, after possibly shrinking $S$, if $\sX _j$ are the SQMs of $\sX$, then the  $\sX_{j,s_i}$ are the SQMs of the $X_i\cong \sX _{s_i}$.  

We now consider the general case. Let $X'_i$ be any small modification of $X_i$ and $B'_i$ is the pull back of $B_i$. Since $(X_i,B_i)$ is klt and $B_i$ is big, it is easy to see that $(X'_i,B'_i)$ is klt and $B'_i$ is big.  Thus there is a $\mathbb{Q}$-factorialization $\phi:X''_i\to X'_i$ (i.e. a small birational morphism such that $X''_i$ is $\mathbb{Q}$-factorial \cite{BCHM10}). Pick $\sX_p$ a SQM of $\sX$, such that 
$$\sX_{p,s_i}\cong X''_i.$$ 
The pull back of ${\rm Nef}(X_i')$ corresponds to a face $\Sigma$ of ${\rm Nef}(X''_{i})$ which in turn corresponds to a face $\Sigma$ of ${\rm Nef}(\sX_p)$. Pick a general $\mathbb Q$-divisor $L$ in the open chamber $\Sigma^0$. By \eqref{l-inv}, $L$ is big and so it induces a morphism $\sX_p \to \sX'_p$, where $X_i'\cong \sX'_{p,s_i}$. Since ${\rm Nef}( \sX_p)$ consists of finitely many polyhedral chambers, we conclude that there are only finitely many possible birational models $\mathcal X'_p$ with a morphism $\mathcal{X}_p\to \mathcal{X}'_p$ and hence the small modifications $X_i$ are also bounded.
\end{proof}

\section{The main result}
Fix $I\subset [0,1]$ a DCC set and $n\in \mathbb N$.
Let $(X,B)$ be a klt pair such that the restriction of $B$ to the generic fiber of the Iitaka fibration is big, $\dim X =n$ and $B\in I$. 
The goal is to find an effective integer $m=m(n,I)$ such that $|m(K_X+B)|$ induces the Iitaka fibration. If $\kappa (K_X+B)=0$ this simply means $h^0(\mathcal O _X(m(K_X+B)))\ne 0$.
Recall the following result from \cite[1.7]{HMX14}.
\begin{prop}\label{t-bd1} 
Fix $I\subset [0,1]\cap \mathbb Q$ a DCC set and $n\in \mathbb N$.  If $(X,B)$ is an $n$-dimensional klt pair with $K_X+B\equiv 0$, $-K_X$ is ample and the coefficients of $B$ are contained in $I$, then $(X,B)$ is in a bounded family. 
\end{prop}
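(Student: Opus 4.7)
My plan is to reduce boundedness of these numerically trivial klt pairs to boundedness of klt pairs of log general type with bounded volume, i.e.\ to \cite[Theorem~1.6]{HMX14}. The key initial observation is that $K_X+B\equiv 0$ together with $-K_X$ ample forces $B\equiv -K_X$, so $B$ is ample, $X$ is itself a klt Fano variety, and the components of $B$ polarize $X$.

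First, the Global ACC theorem (\cite[Theorem~1.5]{HMX14}) applied to the numerically trivial lc pair $(X,B)$ forces the coefficients of $B$ into a finite subset $I_0=I_0(n,I)\subset I$, so without loss of generality $I$ is finite. Second, I would establish that there exists $\epsilon=\epsilon(n,I)>0$ such that every such $(X,B)$ is $\epsilon$-lc. If not, pick a sequence of counterexamples $(X_m,B_m)$, extract exceptional prime divisors $E_m$ over $X_m$ with $a(E_m;X_m,B_m)\to 0$, and compute log canonical thresholds of general members of $|-N K_{X_m}|$ (for fixed $N$ making this system basepoint-free) with respect to $(X_m,B_m)$; the resulting sequence of log canonical thresholds contradicts ACC for log canonical thresholds (\cite[Theorem~1.1]{HMX14}).

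The final step is to bound $(-K_X)^n$ and then apply \cite[Theorem~1.6]{HMX14}. Choose $\eta=\eta(n,I)>0$ small enough, and a general $A\in |-NK_X|$ with $N$ sufficiently divisible for basepoint-freeness, so that $\bigl(X,B+\tfrac{\eta}{N}A\bigr)$ is klt with coefficients in the finite set $I\cup\{\eta/N\}$; then
\[
K_X+B+\tfrac{\eta}{N}A\equiv \eta\cdot(-K_X)
\]
is ample with $\mathrm{vol}\bigl(K_X+B+\tfrac{\eta}{N}A\bigr)=\eta^n(-K_X)^n$. Provided $(-K_X)^n\leq C(n,I)$, Theorem~1.6 of \cite{HMX14} gives that $\bigl(X,B+\tfrac{\eta}{N}A\bigr)$, and therefore $(X,B)$, lies in a bounded family. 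The main obstacle is therefore the volume bound $(-K_X)^n\leq C(n,I)$, which is essentially the anticanonical-volume bound for $\epsilon$-lc Fano varieties; in the present setting one can attack it by a contradiction argument: if the volume were unbounded, high-multiplicity divisors in $|-mK_X|$ at a general point would yield log canonical thresholds violating ACC, again contradicting \cite[Theorem~1.1]{HMX14}.
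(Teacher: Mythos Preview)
Your overall architecture---first establish a uniform $\epsilon$-lc bound, then deduce boundedness---matches the paper's, but both halves of your execution have genuine gaps.

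\medskip

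\textbf{The $\epsilon$-lc step.} You correctly begin by extracting divisors $E_m$ with $a(E_m;X_m,B_m)=\epsilon_m\to 0$, but then you abandon this and try to invoke ACC for log canonical thresholds via general members of $|-NK_{X_m}|$. Two problems: (i) a \emph{general} member of a basepoint-free linear system does not see the bad valuation $E_m$ at all, so its lct carries no information about $\epsilon_m$; (ii) you have no right to fix $N$, since effective basepoint-freeness for $-K_X$ needs a bound on the Cartier index, which you do not yet have. The paper instead finishes your extraction argument in the obvious way: on the extracted model one has $K_{X'_m}+B'_m\equiv 0$ with the coefficient $1-\epsilon_m$ appearing in $B'_m$, and all coefficients lie in the DCC set $J=I\cup\{1-\epsilon_m\}$. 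Global ACC \cite[1.5]{HMX14} then forces these coefficients into a finite set, contradicting $\epsilon_m\to 0$. So the tool you needed was \cite[1.5]{HMX14} again, not \cite[1.1]{HMX14}.

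\medskip

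\textbf{The boundedness step.} Once $\epsilon$-lc is known, the paper simply quotes \cite[1.7]{HMX14}, which already gives boundedness of such pairs; your detour through a volume bound plus \cite[1.6]{HMX14} is unnecessary. More seriously, your proposed volume bound is not a proof: producing divisors $D_m\sim_{\mathbb Q}-K_{X_m}$ of high multiplicity yields a \emph{decreasing} sequence of log canonical thresholds tending to $0$, and ACC for lcts only forbids strictly \emph{increasing} sequences, so there is no contradiction. (The actual anticanonical volume bound for $\epsilon$-lc Fanos is a substantial result, essentially equivalent to \cite[1.7]{HMX14} itself, and is not obtained by a direct lct argument of this shape.)
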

\begin{proof} {\cite[1.7]{HMX14}} proves the statement assuming that there is an $\epsilon >0$ such that the total log discrepancy of $(X,B)$ is grater than $\epsilon$. Thus it suffices to prove that such an $\epsilon >0$ exists.

Suppose that $(X_i,B_i)$ is a sequence of pairs as above with $\lim \epsilon _i=0$ where $\epsilon _i>0$ is the total log discrepancy of $(X_i,B_i)$. Passing to a subsequence, we may assume that $\epsilon _i$ is decreasing. Let $(X_i',B'_i)\to (X_i,B_i)$ be the pairs obtained by extracting a divisor of log discrepancy $\epsilon _i$, then the coefficients of $B'_i$ belong to the DCC set $J=I\cup \{1-\epsilon _i|i\in \mathbb N \}$. Hence the pairs $(X_i',B'_i)$ violate \cite[1.5]{HMX14}, a contradiction. Thus there is an $\epsilon =\epsilon (n,I)>0$ such that $\epsilon _i>\epsilon$ for all $i\in \mathbb N$ and we are done by \cite[1.7]{HMX14}.
\end{proof}

\begin{proof}[Proof of Theorem \ref{t-k=0}]  By \cite{BCHM10}, running the $(K_X+B)$-mmp with scaling, we may assume that $(X,B)$ is a good minimal model, i.e. that $K_X+B\sim _{\mathbb Q }0$. 

Let $X\dasharrow X'$ be a minimal model for $K_X+(1+a)B$ and $h:X'\to Z$ the corresponding log canonical model where $0<a\ll 1$ is chosen so that $(X,(1+a)B)$ is klt. 
By boundedness of log canonical models \cite[1.1.5]{BCHM10}, we may assume that $Z$ is independent of $a$. In particular $h_* B'$ is $\mathbb Q$-Cartier. It follows that $-K_{Z}\sim _{\mathbb Q} h_* B'$ is ample and hence $(Z, h_* B')$ is bounded by \eqref{t-bd1}.

Consider a common resolution $p:Y\to X$ and $q:Y\to Z$, since
$$p^*(K_X+B)=q^*(K_Z+h_*B'),$$
we know that for any exceptional divisor $E$ of $X/Z$, the discrepancy
$$a(E,Z,h_*B')\le 0.$$ Since $(Z,h_*(B)')$ is $\epsilon$-lt (see the proof of  \eqref{t-bd1}),  then by \eqref{p-de} we can construct a model  
 $g:X''\to Z$ such that $\psi: X\dasharrow X''$ is an isomorphism in codimension $1$ and $(X'',B'')$ is bounded where $B''=\psi _* B$ so that $K_{X''}+B''=g^*(K_{Z}+h_*B')$. Note that as $B$ is big, so is $B''$ and  $(X'',B'')$ is klt, $\mathbb Q$-factorial with $K_{X''}+B''\sim _{\mathbb Q}0$. By \eqref{p-sqfm} it follows that $(X,B)$ belongs to a bounded family.
\end{proof}

\begin{proof}[Proof of \eqref{c}]By \cite{Kollar93}, we know that there exists a uniform $N=N(m,n)$ such that $|-NK_X|$ is base point free. Let $D$ be a general divisor in $|-NK_X|$, then 
the pair $(X,\frac{1}{N}D)$ satisfies the assumption of Theorem \ref{t-k=0} and the assertion follows.
\end{proof}

\begin{proof}[Proof of \eqref{t-bd}]
 By \cite[4.4]{Lai11}, we may replace $(X,B)$ by its good minimal model and hence we may assume that $K_X+B$ is semiample. Let $Z={\rm Proj} (R(K_X+B))$  and $f:X\to Z$  the induced morphism which coincides with the Iitaka fibration.
We can write $K_X+B\sim _{\mathbb Q}f^*(K_Z+B_Z+M_Z)$ where $B_Z$ is the boundary part and $M_Z$ is the moduli part. 

Let $\nu :Z'\to Z$ be a resolution of $Z$ and $X''$ be a resolution of the main component 
of $X\times _ZZ'$, $\mu :X''\to X$ the induced morphism. Write $$K_{X''}+B_{X''}=\mu ^*(K_X+B)+\sum a_i E_i$$
such that $E_i$ is exceptional, and the coefficients of $B_{X''}$ are contained in the DCC set $I\cup \{\frac{n-1}{n}|n\in \mathbb{N}\}$ and $a_i> 0$. 
$B''$ is big over $Z$ and so we can run a relative MMP of $(X'',B'')$ over $Z'$. Let $(X',B')\to Z$ be the corresponding minimal model over $Z$.  

We may write $K_{X'}+B_{X'}\sim _{\mathbb Q}{f'}^*(K_{Z'}+B_{Z'}+M_{Z'})$ where $B_{Z'}$ is the boundary part and $M_{Z'}$ is the moduli part. We may assume that $M_{Z'}$ is nef (cf. \cite[Theorem 2]{Kawamata98}). 
\begin{claim} There exists an integer $r>0$ and a DCC set $\Lambda$ such that
\begin{enumerate}
\item $(Z',B_{Z'})$ is klt,
\item the coefficients of $ B_{Z'}$ belong to $\Lambda$,
\item $rM_{Z'}$ is a nef Cartier divisor, and 
\item $K_{Z'}+B_{Z'}+M_{Z'}$ is big.
\end{enumerate}\end{claim}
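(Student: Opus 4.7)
The plan is to reduce the four statements to Theorem \ref{t-k=0} applied to the generic fiber of $f':X'\to Z'$, combined with standard features of the canonical bundle formula and ACC of log canonical thresholds. The central observation is that because $(X',B_{X'})\dasharrow (X,B)$ is birational and $Z'\to Z$ dominates $Z$, the generic fiber $F$ of $f'$ is birational to the generic fiber of the original Iitaka fibration $f:X\to Z$. Hence $(F,B_F)$ is klt with $K_F+B_F\sim_{\mathbb Q}0$ (since $K_{X'}+B_{X'}$ is $f'$-trivial), with coefficients in the DCC set $I\cup\{(n-1)/n:n\in\mathbb N\}$, and with $B_F$ big by hypothesis (4) of Theorem \ref{t-bd}. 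Applying Theorem \ref{t-k=0} yields that $(F,B_F)$ belongs to a bounded family depending only on $n$ and $I$; in particular there is a uniform integer $r=r(n,I)>0$ such that $r(K_F+B_F)\sim 0$ as Cartier divisors.

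With the uniform fiber index $r$ secured, item (3) follows from the canonical bundle formula: the uniform index translates into $rM_{Z'}$ being Cartier by a Kollár--Ambro type argument (as formalized in \cite{BZ14}), and nefness of $M_{Z'}$ after passing to a higher birational model is Kawamata's theorem cited as \cite{Kawamata98} (which is admissible since $Z'$ is just a resolution of $Z$ and we are free to replace it). For item (2), the coefficients of $B_{Z'}$ have the form $1-t_P$, where $t_P$ is the log canonical threshold of $f'^{*}P$ with respect to $(X',B_{X'})$ computed over the generic point of $P$. Because the coefficients of $B_{X'}$ lie in the DCC set $I\cup\{(n-1)/n\}$ and $f'^{*}P$ has positive integer coefficients, the ACC for log canonical thresholds from \cite{HMX14} gives that the $t_P$ range in an ACC set, so the coefficients of $B_{Z'}$ lie in a DCC set $\Lambda$. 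Item (1) is then automatic: $Z'$ is smooth, and since $(X',B_{X'})$ is klt one has $t_P>0$ for every prime divisor $P$ of $Z'$, so every coefficient of $B_{Z'}$ is strictly less than $1$.

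For item (4), the key is an Iitaka dimension computation. Since $(X,B)$ is replaced at the outset by its good minimal model and $f:X\to Z$ is the Iitaka fibration, $\kappa(K_X+B)=\dim Z$. The formula $K_{X''}+B_{X''}=\mu^{*}(K_X+B)+\sum a_iE_i$ with effective $\mu$-exceptional correction preserves Iitaka dimension, and so does the relative MMP $X''\dasharrow X'$; therefore $\kappa(K_{X'}+B_{X'})=\dim Z=\dim Z'$. Since $K_{X'}+B_{X'}\sim_{\mathbb Q}f'^{*}(K_{Z'}+B_{Z'}+M_{Z'})$ and pullback by a surjective morphism with connected fibers preserves Iitaka dimension, we conclude $\kappa(K_{Z'}+B_{Z'}+M_{Z'})=\dim Z'$, i.e., $K_{Z'}+B_{Z'}+M_{Z'}$ is big.

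The main obstacle is the combination in item (3): one must turn the bounded-family conclusion of Theorem \ref{t-k=0} into a genuinely uniform Cartier index for $M_{Z'}$ on some birational model of $Z'$. The ambiguity inherent in the moduli $b$-divisor is exactly what \cite{BZ14} is designed to absorb (they treat $M_{Z'}$ as a $\mathbb Q$-divisor class), so once Step 1 is in hand the Cartier and nef statements follow by direct citation. The remaining items reduce to relatively routine applications of the canonical bundle formula and of ACC of lcts, modulo keeping track that all hypotheses on coefficients and dimension are of the form controlled by $n$ and $I$.
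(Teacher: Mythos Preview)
Your treatment of items (1), (2), and (4) is correct and essentially matches the paper: (2) via ACC for log canonical thresholds from \cite{HMX14}, (4) via the identification of $f'$ with the Iitaka fibration, and (1) is left implicit in the paper but your justification is fine.

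The gap is in item (3). You assert that the uniform integer $r$ with $r(K_F+B_F)\sim 0$ on the generic fiber is the \emph{same} $r$ that makes $rM_{Z'}$ Cartier, invoking a ``Koll\'ar--Ambro type argument (as formalized in \cite{BZ14}).'' This is not correct: the Cartier index of the moduli part is governed by the local monodromy acting on the Hodge structure of an auxiliary cover of the fiber, not by the fiber index alone. Already for elliptic fibrations one has $K_F\sim 0$, so the fiber index is $1$, yet one only obtains $12M_{Z'}$ Cartier. Moreover, \cite{BZ14} takes a bound on the Cartier index of $M_{Z'}$ as \emph{input}; it does not supply the translation you claim.

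The paper closes this gap via the Fujino--Mori mechanism. From boundedness of $(F,B_F)$ (Theorem \ref{t-k=0}) one extracts a uniform $b$ with $b(K_F+B_F)\sim 0$, passes to a log resolution $F''\to F$, forms the associated degree-$b$ cyclic cover, and resolves it to obtain $F'$. Since $(F,B_F)$ lies in a bounded family, so does $F'$, and hence its middle Betti number is bounded by some constant $l$. A monodromy/eigenvalue argument as in \cite[Theorem 3.1]{FM00} then shows that the Cartier index $r$ of $M_{Z'}$ satisfies $\varphi(r)\leq l$, which bounds $r$. You correctly identified boundedness of the fiber as the key input, but the passage to a Cartier index for $M_{Z'}$ genuinely requires this extra Betti-number step.
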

Since $f'$ is birational to the Iitaka fibration of $(X,B_{X})$ and 
$$K_{X'}+B_{X'}\sim _{\mathbb Q}{f'}^*(K_{Z'}+B_{Z'}+M_{Z'}),$$ it follows that (4) holds.

To check that (3) holds, we follow \cite[\S 3]{Todorov10}. Let $F$ be the general fiber of $f$ and write 
$(K_X+B)|_F=(K_F+B_F)$, then $K_F+B_F\sim _{\mathbb Q}0 $ and $B_F$ is big so that by \eqref{t-k=0}, $(F,B_F)$ belongs to a bounded family. In particular, there exists a $b>0$ such that $b(K_F+B_F)\sim 0$. We let $\eta: F''\to F$ be a log resolution of $(F,B_F)$. We write 
$$K_{F''}+B_{F''}=K_{F''}+B_{F''}=\eta ^* (K_F+B_F),$$ then
$$ D_{F''}:=b\{B_{F''}\}\sim_{\mathbb{Q}}b (-K_{F''}-\lfloor B_{F''} \rfloor). $$
 In particular,  $D_{F''}$ is an effective divisor.

We denote by  $\phi: F'''\to  F''$ and  the  corresponding cyclic covering and $\psi:F'\to F'''$ a resolution. Since $F$  is in a bounded family, it is easy to see that we can assume that $F'$ is in a bounded family. In particular, if we denote the dimension of $F$ to be $m$, then its middle Betti number $B_m(F')$ is bounded by a constant $l$. 
\begin{lem}Let $\varphi$ be the Euler function, then there exists a positive integer $r$ such that $rM_{Z'}$ is Cartier and
$\varphi(r)\le l$.
\end{lem}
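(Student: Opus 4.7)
The plan is to identify the Cartier index of the moduli divisor $M_{Z'}$ with the order of a finite-order monodromy element on the middle cohomology of $F'$, and then to use the degree of the cyclotomic polynomial $\Phi_r$ to convert a bound on this order into the bound $\varphi(r)\le l$.

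First, I would spread the cyclic-cover construction over the base. After shrinking $Z'$ to a dense smooth open $Z'_0$ over which the discriminant behaves well, there is a relative version $\mathcal{F}''\to Z'_0$ deforming $F''$, the relation $bD_{F''}\sim -b(K_{F''}+\lfloor B_{F''}\rfloor)$ globalizes to give a degree-$b$ cyclic cover $\mathcal{F}'''\to \mathcal{F}''$, and a simultaneous log resolution yields a smooth projective family $\pi:\mathcal{F}'\to Z'_0$ whose fibers are precisely the $F'$ already bounded in the preceding discussion. In particular the local system $R^m\pi_*\mathbb{Q}$ has fiber $H^m(F',\mathbb{Q})$, of dimension at most $l$.

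Second, I would invoke the canonical bundle formula of Fujino--Mori and Kawamata to identify $M_{Z'}|_{Z'_0}$, up to $\mathbb{Q}$-linear equivalence, with the first Chern class of a Hodge-theoretic line bundle obtained as a direct summand (the $\zeta_b$-eigenspace under the cyclic $\mu_b$-action) of an appropriate Hodge bundle of the VHS $R^m\pi_*\mathbb{Q}$. In this description, $bM_{Z'}$ is the determinant of a Hodge bundle up to a $\mathbb{Q}$-Cartier contribution supported on $Z'\setminus Z'_0$, so the only obstruction to $M_{Z'}$ being Cartier is carried by the torsion part of the local monodromy of the VHS along the boundary divisors. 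Kawamata's unipotent reduction shows that $rM_{Z'}$ becomes Cartier, where $r$ is the order of the semisimple (finite-order) part of this monodromy.

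Third, and this is where the bound enters, the monodromy representation preserves the integral structure, so the torsion element of order $r$ lies in $\mathrm{GL}(H^m(F',\mathbb{Z}))\subset \mathrm{GL}_l(\mathbb{Z})$. Its eigenvalues are roots of unity, and a primitive $r$-th root of unity must appear, so the cyclotomic polynomial $\Phi_r$ divides its characteristic polynomial; comparing degrees gives $\varphi(r)=\deg\Phi_r \le l$, as required. The main obstacle is the second step: cleanly invoking the Hodge-theoretic identification of the Cartier index of the moduli divisor with the order of a monodromy element on the VHS. This requires the canonical bundle formula in the form of Fujino--Mori together with Kawamata's covering trick/unipotent reduction for the moduli part, and has to be cited rather than reproved in this short argument.
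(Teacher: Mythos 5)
Your proposal is correct and follows essentially the same route as the paper's sketch: both invoke the Fujino--Mori canonical bundle formula to identify the Cartier index of $M_{Z'}$ with the order of the finite (semisimple) part of the local monodromy acting via $H^0(F',K_{F'})\subset H^m(F',\mathbb{Z})\otimes\mathbb{C}$, then conclude by the integrality of the monodromy. Your final step is in fact stated more precisely than the paper's — noting that a primitive $r$-th root of unity eigenvalue forces the cyclotomic polynomial $\Phi_r$ to divide the integer characteristic polynomial, hence $\varphi(r)\le l$ — where the paper only remarks that the eigenvalues are algebraic integers.
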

\begin{proof}When $F$ is smooth and $B_F=0$, this is \cite[Theorem 3.1]{FM00}. The general case is well known and similar to \cite{FM00}.  For the reader's convenience, we include a short sketch.  

Following \cite[3.6]{FM00}, fix any divisor $P$ on $Z'$, let $(Q\in W)\to (P\in Z')$ be a base change, such that $X'\times_{Z'} W$ admits a semistable model over a neighborhood of the generic point of $Q$. 
Then $rM_{Z'}$ is integral at $P$ if the local monodromy action  on $H^0(F', K_{F'})$ (where $F'$ is considered as a fiber over $W$) factors through 
$$\mu_r: \mathbb{Z}/r\mathbb{Z}\to \mathbb{C}^*.$$ 

On the other hand, there is a $\mu_r$-equivariant injection
$$H^0(F', K_{F'}) \subset  H^m(F',\mathbb{Z})\otimes \mathbb{C}. $$
The eigenvalues of the corresponding action on $H^m(F',\mathbb{Z})\otimes \mathbb{C}$ are always algebraic integers, thus we conclude $\varphi(r)\le \dim _{\mathbb C}(H^m(F',\mathbb{Z})\otimes \mathbb{C})=l $.
\end{proof}

To check that (2) holds, recall that for a codimension 1 point $P$ on $Z$, its coefficient in $B_{Z'}$ is equal to
$$1-{\rm lct}_{f'^{-1}(P)}(X',Z',f'^{*}(P)), $$
where the log canonical threshold is computed locally around $f'^{-1}(P)$  (cf. \cite{Kawamata98}, \cite[3.1.4]{Ambro99}). Therefore, by \cite[Theorem 1.1]{HMX14}, it is contained in a DCC set. 

By \cite[1.2]{BZ14}, it follows that there is an integer $m$ depending only on $\Lambda $, $r$ and $d=\dim Z$ such that $|m(K_{Z'}+B_{Z'}+M_{Z'})|$ defines a birational map. Since $K_{X'}+B_{X'}\sim _{\mathbb Q} {f'}^*(K_{Z'}+B_{Z'}+M_{Z'})$ it follows that $|m(K_{X'}+B_{X'})|$ defines the Iitaka fibration. This concludes the proof of \eqref{t-k=0}.
\end{proof}



\begin{bibdiv}
\begin{biblist}



\bib{Alexeev94}{article}{
    AUTHOR = {Alexeev, Valery},
     TITLE = {Boundedness and {$K^2$} for log surfaces},
   JOURNAL = {Internat. J. Math.},
  FJOURNAL = {International Journal of Mathematics},
    VOLUME = {5},
      YEAR = {1994},
    NUMBER = {6},
     PAGES = {779--810},
 }


\bib{Ambro99}{article}{
  author={Ambro, Florin},
   title={The Adjunction Conjecture and its applications},
   journal={arXiv:9903060},
   date={1999},
}

\bib{BCHM10}{article}{
   author={Birkar, Caucher},
   author={Cascini, Paolo},
  author={Hacon, Christopher D.},
  author={McKernan, James},
   title={Existence of minimal models for varieties of log general type},
   journal={J. Amer. Math. Soc.},
   volume={23},
   date={2010},
   number={2},
   pages={405--468},
}

\bib{BL14}{article}{
   author={Borisov, Lev},
   author={Li, Zhani},
   title={On complete intersection with trivial canonical class},
   journal={ArXiv:1404.7490},
year={2014}, 
}

\bib{BZ14}{article}{
   author={Birkar, Caucher},
   author={Zhang, De-Qi},
   title={Effectivity of Iitaka fibrations and pluricanonical systems of polarized pairs},
   journal={ArXiv:1410.0938},
year={2014}, 
}

\bib{CC}{article}{
    AUTHOR = {Chen, Jungkai},
    AUTHOR={Chen, Meng},
     TITLE = {Explicit birational geometry of threefolds of general type,
              {I}},
   JOURNAL = {Ann. Sci. \'Ec. Norm. Sup\'er. (4)},
  FJOURNAL = {Annales Scientifiques de l'\'Ecole Normale Sup\'erieure.
              Quatri\`eme S\'erie},
    VOLUME = {43},
      YEAR = {2010},
    NUMBER = {3},
     PAGES = {365--394},
 }
	\bib{CL12}{article}{
    AUTHOR = {Cascini, Paolo}
    AUTHOR ={ Lazi{\'c}, Vladimir},
     TITLE = {New outlook on the minimal model program, {I}},
   JOURNAL = {Duke Math. J.},
  FJOURNAL = {Duke Mathematical Journal},
    VOLUME = {161},
      YEAR = {2012},
    NUMBER = {12},
     PAGES = {2415--2467},
    }


\bib{dFH11}{article}{
    AUTHOR = {de Fernex, Tommaso},
    AUTHOR={ Hacon, Christopher D.},
     TITLE = {Deformations of canonical pairs and {F}ano varieties},
   JOURNAL = {J. Reine Angew. Math.},
    VOLUME = {651},
      YEAR = {2011},
     PAGES = {97--126},
 }

\bib{FM00}{article}{
   AUTHOR = {Fujino, Osamu},
   author={Mori, Shigefumi}
     TITLE = {A canonical bundle formula},
 VOLUME = {56},
     PAGES = {167-188},
     YEAR = {2000},
   journal={J. Differential Geometry}
}

\bib{HM06}{article}{
   author={Hacon, Christopher D.},
   author={McKernan, James},
   title={Boundedness of pluricanonical maps of varieties of general type},
   journal={Invent. Math.},
   volume={166},
   date={2006},
   number={1},
   pages={1--25},
}


\bib{HMX13}{article}{
   author={Hacon, Christopher D.},
     author={McKernan, James},
   author={Xu, Chenyang},
TITLE = {On the birational automorphisms of varieties of general type},
   JOURNAL = {Ann. of Math. (2)},
  FJOURNAL = {Annals of Mathematics. Second Series},
    VOLUME = {177},
      YEAR = {2013},
    NUMBER = {3},
     PAGES = {1077--1111},
}
\bib{HMX14}{article}{
   author={Hacon, Christopher D.},
     author={McKernan, James},
   author={Xu, Chenyang},
 TITLE = {A{CC} for log canonical thresholds},
   JOURNAL = {Ann. of Math. (2)},
  FJOURNAL = {Annals of Mathematics. Second Series},
    VOLUME = {180},
      YEAR = {2014},
    NUMBER = {2},
     PAGES = {523--571},
}

\bib{Jiang13}{article}{
    AUTHOR = {Jiang, Xiaodong},
     TITLE = {On the pluricanonical maps of varieties of intermediate Kodaira dimension}
   JOURNAL = {Math. Ann.},
  FJOURNAL = {Mathematische Annalen},
    VOLUME = {356},
      YEAR = {2013},
    NUMBER = {3},
     PAGES = {979--1004},
 }
 
\bib{Kawamata86}{article}{
    AUTHOR = {Kawamata, Yujiro},
     TITLE = {On the plurigenera of minimal algebraic {$3$}-folds with $K_X\equiv 0$}
   JOURNAL = {Math. Ann.},
  FJOURNAL = {Mathematische Annalen},
    VOLUME = {275},
      YEAR = {1986},
    NUMBER = {4},
     PAGES = {539--546},
 }
 
\bib{Kawamata98}{article}{
    AUTHOR = {Kawamata, Yujiro},
     TITLE = {Subadjunction of log canonical divisors. {II}},
   JOURNAL = {Amer. J. Math.},
  FJOURNAL = {American Journal of Mathematics},
    VOLUME = {120},
      YEAR = {1998},
    NUMBER = {5},
     PAGES = {893--899},
}
	

\bib{Kollar93}{article}{
    AUTHOR = {Koll{\'a}r, J{\'a}nos},
     TITLE = {Effective base point freeness},
   JOURNAL = {Math. Ann.},
  FJOURNAL = {Mathematische Annalen},
    VOLUME = {296},
      YEAR = {1993},
    NUMBER = {4},
     PAGES = {595--605},
      ISSN = {0025-5831},
}

\bib{Kollar94}{incollection} {
    AUTHOR = {Koll{\'a}r, J{\'a}nos},
     TITLE = {Log surfaces of general type; some conjectures},
 BOOKTITLE = {Classification of algebraic varieties ({L}'{A}quila, 1992)},
    SERIES = {Contemp. Math.},
    VOLUME = {162},
     PAGES = {261--275},
 PUBLISHER = {Amer. Math. Soc., Providence, RI},
      YEAR = {1994},
 }



\bib{KM98}{book}{
   author={Koll{\'a}r, J{\'a}nos},
   author={Mori, Shigefumi},
   title={Birational geometry of algebraic varieties},
   series={Cambridge Tracts in Mathematics},
   volume={134},
   note={With the collaboration of C. H. Clemens and A. Corti;
   Translated from the 1998 Japanese original},
   publisher={Cambridge University Press},
   place={Cambridge},
   date={1998},
   pages={viii+254},

}


\bib{Lai11}{article} {
    AUTHOR = {Lai, Ching-Jui},
     TITLE = {Varieties fibered by good minimal models},
   JOURNAL = {Math. Ann.},
  FJOURNAL = {Mathematische Annalen},
    VOLUME = {350},
      YEAR = {2011},
    NUMBER = {3},
     PAGES = {533--547},
  }

\bib{Lazarsfeld}{book}{
    AUTHOR = {Lazarsfeld, Robert},
     TITLE = {Positivity in algebraic geometry. {I}},
    SERIES = {Ergebnisse der Mathematik und ihrer Grenzgebiete. 3. Folge. A
              Series of Modern Surveys in Mathematics [Results in
              Mathematics and Related Areas. 3rd Series. A Series of Modern
              Surveys in Mathematics]},
    VOLUME = {48},
      NOTE = {Classical setting: line bundles and linear series},
 PUBLISHER = {Springer-Verlag, Berlin},
      YEAR = {2004},
     PAGES = {xviii+387},
      ISBN = {3-540-22533-1},
   MRCLASS = {14-02 (14C20)},
  MRNUMBER = {2095471 (2005k:14001a)},
MRREVIEWER = {Mihnea Popa},
       DOI = {10.1007/978-3-642-18808-4},
       URL = {http://dx.doi.org/10.1007/978-3-642-18808-4},
}

\bib{PS06}{article}{
   author={Prokhorov, Yu. G.  },
     author={Shokurov, V. V.},
   title={Toward the Second Main Theorem on Complements: From Local to Global},
   journal={MPI / Max-Planck-Institut f\"ur Mathematik, Bonn },
volum={},
pages={},
   date={2006}
}

\bib{Takayama06}{article}{
   author={Takayama, Shigeharu},
   title={Pluricanonical systems on algebraic varieties of general type.},
   journal={Invent. Math.},
volum={165},
pages={551-587},
   date={206}
}

\bib{Todorov10}{article}{
    AUTHOR = {Todorov, Gueorgui Tomov},
     TITLE = {Effective log {I}itaka fibrations for surfaces and threefolds},
   JOURNAL = {Manuscripta Math.},
  FJOURNAL = {Manuscripta Mathematica},
    VOLUME = {133},
      YEAR = {2010},
    NUMBER = {1-2},
     PAGES = {183--195},
}

\bib{Tsuji06}{article}{
    AUTHOR = {Tsuji, Hajime},
     TITLE = {Pluricanonical systems of projective varieties of general   type. {I}},
   JOURNAL = {Osaka J. Math.},
  FJOURNAL = {Osaka Journal of Mathematics},
    VOLUME = {43},
      YEAR = {2006},
    NUMBER = {4},
     PAGES = {967--995},
}

\bib{TX09}{article}{
    AUTHOR = {Todorov, Gueorgui}
    AUTHOR={Xu, Chenyang},
     TITLE = {Effectiveness of the log {I}itaka fibration for 3-folds and
              4-folds},
   JOURNAL = {Algebra Number Theory},
  FJOURNAL = {Algebra \& Number Theory},
    VOLUME = {3},
      YEAR = {2009},
    NUMBER = {6},
     PAGES = {697--710}
 }

\bib{VZ00}{article}{
    AUTHOR = {Viehweg, Eckart}
    AUTHOR={Zhang, De-Qi},
     TITLE = {Effective {I}itaka fibrations},
   JOURNAL = {J. Algebraic Geom.},
  FJOURNAL = {Journal of Algebraic Geometry},
    VOLUME = {18},
      YEAR = {2009},
    NUMBER = {4},
     PAGES = {711--730},
  }

\end{biblist}
\end{bibdiv}
\bigskip

\end{document}